\newcommand*{\rom}[1]{\expandafter\@slowromancap\romannumeral #1@}
\newcommand{\calb}{\mathcal{B}}
\newcommand{\calp}{\mathcal{P}}
\numberwithin{equation}{section}
\newtheorem{thm}{Theorem}[section]
\newtheorem{cor}[thm]{Corollary}
\newtheorem{lem}[thm]{Lemma}
\newtheorem{prop}[thm]{Proposition}
\newtheorem{exa}[thm]{Example}
\newtheorem{defi}[thm]{Definition}
\newtheoremstyle{TheoremNum}
{\topsep}{\topsep}              
{\itshape}                      
{}                              
{\bfseries}                     
{.}                             
{ }                             
{\thmname{#1}\thmnote{ \bfseries #3}}
\theoremstyle{TheoremNum}
\newtheorem{thmrep}{Theorem}
\newtheoremstyle{TheoremNum}
{\topsep}{\topsep}              
{\itshape}                      
{}                              
{\bfseries}                     
{.}                             
{ }                             
{\thmname{#1}\thmnote{ \bfseries #3}}
\theoremstyle{TheoremNum}
\newtheoremstyle{TheoremNum}
{\topsep}{\topsep}              
{\itshape}                      
{}                              
{\bfseries}                     
{.}                             
{ }                             
{\thmname{#1}\thmnote{ \bfseries #3}}
\theoremstyle{TheoremNum}
\newtheorem{correp}{Corollary}
\theoremstyle{plain}
\newtheorem{corollary}[thm]{Corollary}
\newtheorem{definition}[thm]{Definition}
\newtheorem{lemma}[thm]{Lemma}
\newtheorem{rem}[thm]{Remark}
\newtheorem{remark}[thm]{Remark}
\newcommand\be{\begin{equation}}
	\newcommand\ee{\end{equation}}
\newcommand\bea{\begin{eqnarray}}
	\newcommand\eea{\end{eqnarray}}
\newcommand\bi{\begin{itemize}}
	\newcommand\ei{\end{itemize}}
\newcommand\ben{\begin{enumerate}[(a)]}
	\newcommand\een{\end{enumerate}}
\newcommand\bc{\begin{center}}
	\newcommand\ec{\end{center}}
\def\ba#1\ea{\begin{align*}#1\end{align*}}
\newcommand{\C}{\ensuremath{\mathbb{C}}}
\newcommand{\Z}{\ensuremath{\mathbb{Z}}}
\newcommand{\Q}{\mathbb{Q}}
\newcommand{\N}{\mathbb{N}}
\newcommand{\E}{\mathbb{E}}
\newcommand{\leg}[2]{\left( \frac{{#1}}{{#2}} \right)}
\newcommand{\ve}{\varepsilon}
\newcommand{\limi}[1]{\lim_{{#1}\to\infty}}
\newcommand{\lr}[1]{\left\langle#1\right\rangle}
\newcommand{\inv}[1]{{#1}^{-1}}
\newcommand{\norm}[1]{\left\vert\left\vert {#1}\right\vert\right\vert}
\newcommand{\psum}[1]{\frac{1}{p} \sum_{{#1}=1}^{p}}
\newcommand{\nmsum}[1]{\frac{1}{N_m} \sum_{{#1}=1}^{N_m}}
\newcommand{\nsum}[1]{\frac{1}{N} \sum_{{#1}=1}^{N}}
\newcommand{\lpf}[1]{\mathrm{lpf}({#1})}
\newcommand{\clim}[2]{\lim_{{#1}\to\infty} \frac 1{#1} \sum_{{#2}=1}^{#1}}
\newcommand{\dbar}[1]{\overline{d}({#1})}
\newcommand{\fs}{Furstenberg--S\'ark\"ozy }
\date{\today}
\title{The Furstenberg--S\'ark\"ozy Theorem and Asymptotic Total Ergodicity Phenomena in Modular Rings}
\author{Vitaly Bergelson and Andrew Best}
\begin{document}
	\maketitle
	\begin{abstract}
		The \fs theorem asserts that the difference set $E-E$ of a subset $E \subset \N$ with positive upper density intersects the image set of any polynomial $P \in \Z[n]$ for which $P(0)=0$. Furstenberg's approach relies on a correspondence principle and a polynomial version of the Poincar\'e recurrence theorem, which is derived from the ergodic-theoretic result that for any measure-preserving system $(X,\calb,\mu,T)$ and set $A \in \calb$ with $\mu(A) > 0$, one has $c(A):= \lim_{N \to \infty} \frac{1}{N} \sum_{n=1}^N \mu(A \cap T^{-P(n)}A) > 0.$ The limit $c(A)$ will have its optimal value of $\mu(A)^2$ when $T$ is totally ergodic. Motivated by the possibility of new combinatorial applications, we define the notion of asymptotic total ergodicity in the setting of modular rings $\Z/N\Z$. We show that a sequence of modular rings $\Z/N_m\Z, \ m \in \N,$ is asymptotically totally ergodic if and only if $\lpf{N_m}$, the least prime factor of $N_m$, grows to infinity. From this fact, we derive some combinatorial consequences, for example the following.
		Fix $\delta \in (0,1]$ and a (not necessarily intersective) polynomial $P \in \Q[n]$ with $\deg(P) > 1$ such that $P(\Z) \subseteq \Z$. For any integer $N > 1$ with $\lpf N$ sufficiently large, for any subsets $A$ and $B$ of $\Z/N\Z$ such that $|A||B| \geq \delta N^2$, one has $\Z/N\Z = A + B + S$, where $S = \{ P(n) : 1 \leq n \leq N \} \subset \Z/N\Z$.
	\end{abstract}
	
	\section{Introduction}
	

	A nonzero polynomial $P \in \Q[n]$ is \emph{intersective} if $P(\Z)$ contains a multiple of every positive integer; examples are $P(n) = n^2$ and $P(n) = \frac{n(n+1)(n+2)}{6}$, and a nonexample is $P(n)=n^2+1$. The \emph{upper density} of a subset $E \subseteq \N = \{1,2,\ldots\}$ is defined as $\dbar E := \limsup_{N \to \infty} \frac{|E \cap \{1,\ldots, N\} |}{N}$, where $|\cdot|$ denotes the cardinality of a set. We recall the \fs theorem, by now classical:
	\begin{thm}[{\cite{kmf}; see also \cite{sar1,sar3} and \cite{diag77,furstenbergbook}}]\label{fs thm} If $P$ is an intersective polynomial and $E \subseteq \N$ is a subset of natural numbers with $\dbar E > 0$, then $(E-E) \cap P(\Z)$ is nonempty.
	\end{thm}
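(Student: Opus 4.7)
The plan is to follow Furstenberg's ergodic-theoretic route, exactly along the lines sketched in the abstract. The first step is to invoke Furstenberg's correspondence principle to convert the combinatorial claim into a recurrence statement: given $E \subseteq \N$ with $\dbar E > 0$, I would produce a measure-preserving system $(X,\calb,\mu,T)$ and a set $A \in \calb$ with $\mu(A) = \dbar E$ such that, for every $n \in \Z$,
\[
\dbar\bigl(E \cap (E-n)\bigr) \geq \mu\bigl(A \cap T^{-n}A\bigr).
\]
Thus it suffices to find $n \in \Z$ with $P(n) \neq 0$ and $\mu(A \cap T^{-P(n)}A) > 0$; by positivity this is implied by $c(A) > 0$.

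The second step is the polynomial Poincar\'e recurrence statement itself, namely that $c(A) := \lim_{N\to\infty} \frac{1}{N}\sum_{n=1}^{N} \mu(A \cap T^{-P(n)}A) > 0$ whenever $P$ is intersective and $\mu(A) > 0$. I would prove this by the Jacobs--de Leeuw--Glicksberg / Koopman--von Neumann decomposition $L^2(\mu) = \mathcal{H}_c \oplus \mathcal{H}_{wm}$, writing $\mathbf{1}_A = f_c + f_{wm}$ where $f_c$ lies in the closure of the span of eigenfunctions and $f_{wm}$ is weakly mixing. Expanding the correlation and passing to Ces\`aro averages, the cross-terms and the $f_{wm}$-diagonal term all vanish in the limit; this is the content of a polynomial van der Corput / PET-style argument, which shows that along any polynomial sequence the weakly mixing part contributes zero. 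Hence in the limit only the compact component $\lim_N \frac{1}{N}\sum_{n=1}^{N} \langle f_c, U_T^{P(n)} f_c\rangle$ survives.

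The third step — where the intersectivity of $P$ is used and which I expect to be the main technical obstacle — is to show that this compact contribution is at least $\mu(A)^2 > 0$. By the spectral theorem this reduces to understanding $\lim_N \frac{1}{N}\sum_{n=1}^{N} e^{2\pi i \alpha P(n)}$ for $\alpha$ in the (countable) group of eigenvalues. For irrational $\alpha$ the Weyl equidistribution theorem for polynomials gives $0$, so only the rational eigenvalues contribute. For $\alpha = a/q$, the intersective hypothesis produces an $n_0$ with $q \mid P(n_0)$, so that $P(n_0 + qk)/q$ is an integer-valued polynomial and $e^{2\pi i \alpha P(n_0 + qk)} = 1$; iterating this along arithmetic progressions shows the average is a positive rational (and one can in fact push the analysis to show the total compact contribution is $\geq \|f_c\|_1^2 \geq \mu(A)^2$, giving the Khintchine-type lower bound).

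Finally, once $c(A) > 0$ is established, there must be infinitely many $n \geq 1$ with $\mu(A \cap T^{-P(n)}A) > 0$, and the correspondence principle delivers the desired $n$ with $P(n) \in E - E$. The delicate point of the whole argument is the polynomial van der Corput induction needed to kill the weakly mixing part along $P(n)$ — essentially Furstenberg's PET scheme — together with the careful use of intersectivity to turn rational eigenvalues into a strictly positive contribution; the other steps are standard soft reductions.
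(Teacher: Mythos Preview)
Your overall route---Furstenberg correspondence, then polynomial Poincar\'e recurrence via the compact/weakly mixing splitting---is exactly the approach the paper sketches (and attributes to \cite{diag77,furstenbergbook,kmf}); the paper does not give a self-contained proof of Theorem~\ref{fs thm} beyond that outline, so at the level of strategy you are aligned with it.

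However, two points in your third step are genuinely problematic. First, your parenthetical claim that the compact contribution is $\geq \mu(A)^2$ (a ``Khintchine-type lower bound'') is false in general, and the paper itself supplies the counterexample: the rotation on $15$ points with $A=\{0,7\}$ and $P(n)=n^2$ has $c(A)=2/225<\mu(A)^2$. That system \emph{is} its own Kronecker factor, so the compact contribution really can fall below $\mu(A)^2$. You only need $c(A)>0$, so this overclaim does not sink the argument, but it should be removed. Second, your mechanism for rational eigenvalues is broken as stated: from $q\mid P(n_0)$ you assert that $P(n_0+qk)/q$ is integer-valued, but for $\Q$-coefficient intersective polynomials this fails (take $P(n)=\binom{n}{2}$ and $q=2$: $P(0)=0$ yet $P(2)=1$). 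Even for integer-coefficient $P$, the individual averages $\lim_N \frac{1}{N}\sum_n e^{2\pi i aP(n)/q}$ are complex and need not be positive, so ``the average is a positive rational'' is not the right conclusion eigenvalue-by-eigenvalue. The standard fix is not to analyze eigenvalues separately but to work on the Kronecker factor $Z$ directly: one shows, using intersectivity, that the set of $n$ for which $P(n)\alpha$ lies in any given neighborhood of $0\in Z$ has positive lower density, whence $\int g\cdot R_\alpha^{P(n)}g$ is close to $\int g^2>0$ on a set of positive density. That is the argument you should substitute for your spectral computation.
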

	Theorem~\ref{fs thm} was proven independently by S\'ark\"ozy in \cite{sar1,sar3} for $P(n) = n^k$ (for integers $k \geq 2$) and $P(n) = n^2 - 1$ using analytic number theory and by Furstenberg in \cite{diag77} for $P(n) = n^2$ and in \cite{furstenbergbook} for $P \in \Q[n]$ such that $P(\Z) \subset \Z$ and $P(0)=0$ using ergodic theory. The unified form of the \fs theorem stated above as Theorem~\ref{fs thm} is due to an observation in \cite{kmf}. Let us describe Furstenberg's ergodic approach to Theorem~\ref{fs thm}. 

	A measure-preserving system is a quadruple $(X,\calb,\mu,T)$, where $(X,\calb,\mu)$ is a probability space and $T: X \to X$ is a measure-preserving transformation. Furstenberg's correspondence principle can be stated as follows: If $E \subseteq \N$ is such that $\overline{d}(E) > 0$, then there is an invertible measure-preserving system $(X,\calb,\mu,T)$ and a set $A \in \calb$ with $\mu(A) = \overline{d}(E) > 0$ such that for all $r \in \N$ and $h_1,\ldots, h_r \in \N$, one has $\overline{d}( E \cap (E - h_1) \cap \cdots \cap (E -h_r)) \ \geq \ \mu(A \cap T^{-h_1}A \cap \cdots \cap T^{-h_r}A)$, where by $E-h_1$ we mean the set $\{e -h_1: e \in E\}$. (See for instance \cite[Theorem 1.1]{b1}.)
	
	Now, let $P \in \Q[n]$ be an intersective polynomial, and suppose $E \subset \N$ has $\dbar E > 0$. If $\dbar{E \cap (E-P(n))} > 0$ for some nonzero $n$, then the set $\{ m \in \N : m, m + P(n) \in E \}$ is nonempty, proving Theorem~\ref{fs thm}. By the Furstenberg correspondence principle, there exists an $(X,\calb,\mu,T)$ and a set $A \in \calb$ such that $\dbar{E \cap (E-P(n))} \geq \mu(A \cap T^{-P(n)}A)$ for any integer $n$. Now one has to show Poincar\'e recurrence along $P$, which asserts that for any measure-preserving system $(X,\calb,\mu,T)$ and any $A \in \calb$ such that $\mu(A) > 0$, there exists a nonzero $n$ such that $\mu(A\cap T^{-P(n)}A) > 0$. The classical Poincar\'e recurrence theorem corresponds to the case $P(n) = n$.
	
	Unlike the classical Poincar\'e recurrence theorem, which can be proved using the pigeonhole principle, Poincar\'e recurrence along $P$ necessitates more sophisticated tools. The approach in \cite{diag77} or \cite{furstenbergbook} relies on an ergodic theorem of the following shape: If $(X, \calb, \mu, T)$ is any measure-preserving system, and $A \in \calb$ is any set with $\mu(A) > 0$, then there exists $c(A) > 0$ such that
	\be \label{positive square eqn 1}
	\clim N n \mu(A \cap T^{-P(n)}A) \ = \ c(A).
	\ee
	It is natural to inquire whether 
	$c(A)$ could be strengthened to an optimally large quantity. 
	If $\mathsf{X}=(X, \calb, \mu, T)$ is ergodic, i.e., if every $T$-invariant set $A \in \calb$ satisfies either $\mu(A) = 0$ or $\mu(A) = 1$, the mean ergodic theorem implies (and is equivalent to) the fact that for any set $A \in \calb$, one has
	\be \label{intro eqn 0}
	\clim N n \mu(A \cap T^{-n}A) \ = \ \mu(A)^2.
	\ee
	If $\mathsf{X}$ is not ergodic, then it follows from the mean ergodic theorem and Cauchy--Schwarz that for any set $A \in \calb$ one has
	\be \label{intro eqn 0'}
	\clim N n \mu(A \cap T^{-n}A) \ \geq \ \mu(A)^2.
	\ee
	See, for instance, the discussion below Theorem 5.1 in \cite{ertau}.
	
	For juxtaposition, suppose $\mathsf{X}$ is totally ergodic, i.e., the system $(X,\calb,\mu,T^k)$ is ergodic for every $k \in \N$. Then certainly \eqref{intro eqn 0} holds with $n$ replaced by $kn$, but more is true. Indeed, 
	total ergodicity is equivalent to ``ergodicity along polynomials''; that is, for every set $A \in \calb$ and every nonconstant integer-valued polynomial\footnote{A polynomial $P \in \Q[n]$ is said to be integer-valued if $P(\Z) \subseteq \Z$.} $P(n)$, we have
	\be\label{intro eqn 1}
	\clim N n \mu(A \cap T^{-P(n)}A) \ = \ \mu(A)^2.
	\ee
	See for instance \cite[Lemma 3.14]{furstenbergbook} or the proof of \cite[Theorem 1.31]{codet}. Following the above discussion of formulas \eqref{intro eqn 0} and \eqref{intro eqn 0'}, one might hope that removing the assumption that $\mathsf{X}$ is totally ergodic would convert \eqref{intro eqn 1} to the inequality
	\be\label{intro eqn 2}
	\clim N n \mu(A \cap T^{-P(n)}A) \ \geq \ \mu(A)^2.
	\ee
	However, this is not the case.
	
	Let us give an example where \eqref{intro eqn 2} fails to hold for some $P$, say $P(n) = n^2$. If $N > 1$ is an integer, then \emph{the rotation on $N$ points} is the ergodic measure-preserving system $(\Z/N\Z, \calp(\Z/N\Z), \mu, T)$, where $\Z/N\Z = \{0,1,\ldots, N-1\}$, $\calp(\Z/N\Z)$ is the power set, $\mu$ is the counting measure normalized so that $\mu(\Z/N\Z) = 1$, and $T$ is the map $n \mapsto n + 1$ modulo $N$. Let $N = 15$ and $A = \{0,7\}$. Then a quick computation shows
	\be
	\clim N n \mu(A \cap T^{-n^2}A) = \frac{1}{15} \sum_{n=1}^{15} \mu(A \cap T^{-n^2}A) = \frac{2}{225} < \frac{4}{225} = \mu(A)^2.
	\ee
	
	Now we consider the bigger picture behind this example. Recall that a measure-preserving system $\mathsf{Y} = (Y,\mathcal{C}, \nu, S)$ is a factor of $(X,\calb,\mu,T)$ if there is a surjective map $\phi: X \to Y$ which preserves the measure (i.e. $\nu = \mu \circ \phi^{-1}$) and intertwines $T$ and $S$ (i.e. $S \circ \phi = \phi \circ T$ holds $\mu$-almost everywhere), and $\mathsf{Y}$ is a finite factor if the underlying set $Y$ has finitely many points. Totally ergodic systems are precisely those which do not have any (nontrivial) finite factors. In this light, one arrives at the conclusion that the reason for \eqref{intro eqn 2} to fail is exactly the presence of finite factors, so the previous example is representative.
	
	There is a natural question that is suggested by the previous discussion. Although the rotation on $N$ points is never totally ergodic, is there a meaningful sense in which it becomes ``more'' totally ergodic as $N$ grows? On account of ``local obstruction'', it is not enough to simply let $N$ grow. In fact, the answer to this question depends on how the factorization of $N$ changes.
	
	Motivated by the fact that total ergodicity is equivalent to the statement that for any positive integer $k$ and any $A \in \calb$,
	\be
	\clim N n \mu(A \cap T^{-kn}A) \ = \ \mu(A)^2,
	\ee
	we define the notion of asymptotic total ergodicity as follows.
	\begin{definition} \label{defn 1} Let $(N_m)$ be a sequence of positive integers. For each $m \in \N$, let $\mathsf{X}_m = (\Z/N_m\Z, \calp(\Z/N_m\Z), \mu_{m}, T_{m})$ be the rotation on $N_m$ points. We say that the sequence $(\mathsf{X}_m)$ is asymptotically totally ergodic if for every positive integer $k$, we have
		\be \label{ATEeqn}
		\limi m \max_{A \subseteq \Z/N_m\Z} \left\vert \nmsum n \mu_m(A \cap T_m^{kn}A) - \mu_m(A)^2 \right\vert \ = \ 0.\footnote{
			In this definition, we use the terminology ``rotation on $N_m$ points'' having in mind the following idea: A circle with $N_m$ points, rotation being $x \mapsto x + 1 \bmod N_m$, seems to be a decent approximation of an \textit{irrational} rotation on the actual unit circle, a transformation which is totally ergodic.
		}
		\ee
	\end{definition}
	For an integer $N > 1$, define $\lpf N$ to be the least prime factor of $N$.
	In the setup of Definition~\ref{defn 1}, the following proposition characterizes the asymptotically totally ergodic sequences $(\mathsf{X}_m)$ as precisely those for which $\limi m \lpf{N_m} = \infty$.
	\begin{prop}\label{ATE for cyclic groups} Assume the setup of Definition~\ref{defn 1}. Then $\limi m \lpf{N_m} = \infty$ if and only if $(\mathsf{X}_m)$ is asymptotically totally ergodic.
	\end{prop}
	\begin{proof} First suppose $\limi m \lpf{N_m} = \infty$. Let $k$ be a positive integer. Then, since the map $n \mapsto kn$ is a permutation of $\Z/N_m\Z$ for all $N_m$ with $\lpf{N_m} > k$, it follows that for all such $N_m$, for all $A \subseteq \Z/N_m\Z$, we have
		\be
		\nmsum n \mu_m (A \cap T_m^{kn} A) \ = \ \nmsum n \mu_m (A \cap T_m^n A) \ = \ \mu_m(A)^2,
		\ee
		where the latter equality holds by either the ergodic theorem or a simple counting argument. 
		
		Second, we show with an example that the condition $\limi M \lpf{N_m} = \infty$ is necessary for the sequence $(\mathsf{X}_m)$ to satisfy the property that for every positive integer $k$, \eqref{ATEeqn} holds.
		If $\limi M \lpf{N_m} \neq \infty$, then, passing to a subsequence if necessary, there is some prime $p$ such that $\lpf{N_{m}} = p$ for all $m$. Take $k = p$. If $\lpf{N_m} = p$, then there are many subsets of $\Z/N_m\Z$ that are $T_m^p$-invariant, and these sets will suffice. For example, if we take $A = \{ n : n \equiv 0 \bmod p\}$, then
		\be
		\nmsum n \mu_m(A \cap T_m^{pn} A) - \mu_m(A)^2 \ = \ \mu_m(A)-\mu_m(A)^2 \ = \ \frac{p-1}{p^2},
		\ee
		which certainly implies \eqref{ATEeqn} does not hold for $k = p$.
	\end{proof}
	Let us upgrade this proposition. We define an asymptotic version of \eqref{intro eqn 1} as follows.
	\begin{definition} \label{defn 2} Let $(N_m)$ be a sequence of positive integers. For each $m \in \N$, let $\mathsf{X}_m = (\Z/N_m\Z, \calp(\Z/N_m\Z), \mu_{m}, T_{m})$ be the rotation on $N_m$ points. Let $P(n)$ be an integer-valued polynomial. We say that the sequence $(\mathsf{X}_m)$ has Property $P$-\emph{LA} (large averages) if
		\be
		\limi m \max_{A \subseteq \Z/N_m\Z} \left\vert \nmsum n \mu_m(A \cap T_m^{P(n)}A) - \mu_m(A)^2 \right\vert \ = \ 0.
		\ee
	\end{definition}
	\begin{remark} \label{canonicality remark}
		There is a small technical issue in the definition of Property $P$-\emph{LA} that we have avoided by referring to the sum over $n \in \{1,\ldots, N_m\}$ rather than the more canonical sum over $n \in \Z/{N_m}\Z$. Namely, if $P(n)$ is an integer-valued polynomial, it is not necessarily true that $\{P(n) : n \in \Z/N\Z \}$ is a well-defined subset of $\Z/N\Z$. For example, if $P(n) = \frac{n(n-1)}{2}$ and $N = 2$, then $P(0) = 0$ but $P(2) = 1$. However, for a given integer-valued polynomial $P$, if $N > 1$ is an integer such that $\lpf N$ is sufficiently large depending on $P$, then $\{P(n) : n \in \Z/N\Z\}$ will be well defined; see Proposition~\ref{canonicality remark proof} for a proof. In relevant results below, since $\limi m \lpf{N_m} = \infty$, we are thus always in the situation where the sum appearing in the definition of Property $P$-\emph{LA} is eventually the more canonical sum over $n \in \Z/N_m\Z$.
	\end{remark}
	For a single measure-preserving system, total ergodicity is characterized by ergodicity along nonconstant integer-valued polynomials \`a la \eqref{intro eqn 1}, not just along polynomials $kn$. Analogously, one hopes that asymptotically totally ergodic sequences $(\mathsf{X}_m)$ of rotations on finitely many points are precisely those that have Property $P$-LA for every nonconstant integer-valued polynomial $P$. Indeed, they are, and this characterization follows from the main theorem of the paper, which we describe now.
	
	If $N$ is a positive integer and $\mu_N$ is the normalized counting measure on $\Z/N\Z$, then for functions $f, g : \Z/N\Z \to \C$ an inner product is defined by
	\be \lr{f,g}_N := \int f \overline{g} \ d\mu_N \ = \ \nsum n f(n)\overline{g(n)},
	\ee
	and we write the corresponding norm $||f||_N := \sqrt{\lr{f,f}_N}$. Since the value of $N$ will be clear from the context, we will generally suppress the $N$ in $\lr{\cdot,\cdot}_N$ and $||\cdot||_N$. Note also that if $T : X \to X$ is a measure-preserving transformation on $(X,\calb,\mu)$, then $T$ defines a unitary operator on $L^2(X)$ by $(Tf)(x) = f(Tx)$.
	
	The following theorem is proven in Section \ref{advanced cyclic group results}:
	
	\begin{thmrep}[\ref{main thm for cyclic groups}]  Let $(N_m)$ be a sequence of positive integers such that $\limi m \lpf{N_m} = \infty$. For each $m \in \N$, let $\mathsf{X}_m$ be the rotation on $N_m$ points. Then, for each nonconstant integer-valued polynomial $P(n)$, we have 
		\be \limi m \sup_{\norm{f_m}\leq 1}\left\vert\left\vert \nmsum n T^{P(n)}_m f_m - \int f_m  \right\vert\right\vert = 0,\ee
		where the supremum is over functions $f_m : \Z/N_m\Z \to \C$ such that $\norm{f_m} \leq 1$.
	\end{thmrep}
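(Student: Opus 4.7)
The plan is to diagonalize via the additive characters $\chi_k(x) = e^{2\pi i kx/N_m}$, $k \in \Z/N_m\Z$, which form an orthonormal basis of $L^2(\mu_m)$ and are eigenfunctions of $T_m$ with eigenvalue $e^{2\pi i k/N_m}$. Expanding $f_m = \sum_k \hat f_m(k)\chi_k$ in this basis yields
\begin{equation*}
\frac{1}{N_m}\sum_{n=1}^{N_m} T_m^{P(n)}f_m - \int f_m \, d\mu_m \;=\; \sum_{k \not\equiv 0} \hat f_m(k)\, S_m(k)\, \chi_k, \qquad S_m(k) := \frac{1}{N_m}\sum_{n=1}^{N_m} e^{2\pi i k P(n)/N_m}.
\end{equation*}
By Parseval together with the bound $\sum_k |\hat f_m(k)|^2 = \|f_m\|^2 \leq 1$, the $L^2$-norm of the left-hand side is at most $\max_{k \not\equiv 0 \bmod N_m}|S_m(k)|$. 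Thus the theorem reduces to the Weyl-type exponential sum estimate $\max_{k \not\equiv 0 \bmod N_m}|S_m(k)| \to 0$ as $\lpf{N_m} \to \infty$.

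To attack $S_m(k)$, I would factor through the Chinese Remainder Theorem: if $N_m = \prod_i p_i^{e_i}$ and $k/N_m \equiv \sum_i k_i/p_i^{e_i} \pmod{1}$, then because $P(n) \bmod p_i^{e_i}$ depends only on $n \bmod p_i^{e_i}$ the complete sum factors as
\begin{equation*}
S_m(k) \;=\; \prod_i T_{p_i, e_i}(k_i), \qquad T_{p,e}(\ell) := \frac{1}{p^e}\sum_{n=0}^{p^e-1} e^{2\pi i \ell P(n)/p^e}.
\end{equation*}
Each local factor has modulus at most $1$, and if $k \not\equiv 0 \bmod N_m$ then at least one $k_{i^*}$ is nonzero modulo $p_{i^*}^{e_{i^*}}$ with $p_{i^*} \geq \lpf{N_m}$. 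It therefore suffices to show $|T_{p,e}(\ell)| \to 0$ as $p \to \infty$, uniformly in $e \geq 1$ and in $\ell \not\equiv 0 \bmod p^e$.

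The step I expect to be the main obstacle is this prime-power estimate, since it must be uniform in the exponent $e$, which can be as large as $\log_p N_m$. Writing $P = Q/D$ with $Q \in \Z[n]$ of degree $d$ and leading coefficient $A \neq 0$, once $\lpf{N_m}$ exceeds $|ADd|$ every prime divisor $p$ of $N_m$ is coprime to these quantities, so $P$ reduces to a nondegenerate degree-$d$ polynomial modulo $p^e$. I would handle $e = 1$ via the Weil bound, which gives $|T_{p,1}(\ell)| \leq (d-1)/\sqrt{p}$, and treat $e \geq 2$ either by Hua's inequality or by the elementary substitution $n = u + v\, p^{\lceil e/2 \rceil}$ combined with a Taylor expansion of $P$: the $v$-sum then collapses to $p^{\lfloor e/2\rfloor}$ times the indicator that $\ell P'(u) \equiv 0 \bmod p^{\lfloor e/2\rfloor}$, and a Hensel-lifting count of such $u$ (using $p \nmid dA$, which controls the leading coefficient of $P'$) gives $|T_{p,e}(\ell)| \leq C_d\, p^{-1/d}$. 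Combining yields $\max_{k \not\equiv 0} |S_m(k)| \leq C_d \, \lpf{N_m}^{-1/d} \to 0$, and the Fourier reduction above then finishes the theorem.
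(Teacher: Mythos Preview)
Your approach is correct but genuinely different from the paper's. The paper never factors through the Chinese Remainder Theorem or invokes Weil/Hua-type bounds; instead it runs a purely elementary van der Corput (Weyl differencing) argument. Concretely, after reducing to $\int f=0$ and to integer coefficients, the paper applies the inequality
\[
\Bigl\|\tfrac1N\sum_n T^{P(n)}f\Bigr\|^{2^{d-1}}\;\le\;\frac{1}{N^{d-1}}\sum_{h_1,\dots,h_{d-1}}\Bigl\langle \tfrac1N\sum_n T^{\Delta_{d-1}(P(n);h_1,\dots,h_{d-1})}f,\;f\Bigr\rangle,
\]
observes that $\Delta_{d-1}(P;\cdot)$ is linear in $n$ with slope $d!\,c_d\prod h_i$, reindexes (using that $d!c_d$ is a unit once $\lpf N>\max(d,|c_d|)$), and then bounds the resulting multilinear average by $|\hat f(0)|^2+(d-1)/\lpf N$ via an elementary gcd count. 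This yields $\|\cdot\|\le((d-1)/\lpf N)^{2^{-(d-1)}}$. Your route trades self-containment for strength: the Fourier/CRT reduction together with the local bound $|T_{p,e}(\ell)|\le C_d\,p^{-1/d}$ gives the much better decay $\lpf{N_m}^{-1/d}$ (indeed $\lpf{N_m}^{-1/2}$ in the squarefree case via Weil), at the cost of importing the Weil bound and Hua's estimate. One caveat: your stationary-phase sketch for $e\ge2$ is essentially right, but the ``Hensel-lifting count'' of $u$ with $P'(u)\equiv 0\bmod p^{\lfloor e/2\rfloor}$ needs a genuine root-counting lemma (the number of zeros of a degree-$r$ polynomial modulo $p^m$ can be as large as $\asymp p^{m(1-1/r)}$), so if you want to avoid Hua's inequality you should state and prove that bound explicitly; alternatively, simply citing Hua's $|\sum_{n\bmod p^e}e(\ell P(n)/p^e)|\ll_d p^{e(1-1/d)}$ (after reducing to $\gcd(\ell,p)=1$ via $T_{p,e}(p^s\ell_0)=T_{p,e-s}(\ell_0)$) cleanly delivers your claimed $C_d\,p^{-1/d}$.
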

	This theorem has several consequences. First, we complete the circle mentioned earlier. By the theorem, any sequence $(\mathsf{X}_m)$ of rotations on $N_m$ points with $\lpf{N_m} \to \infty$ has Property $P$-LA for every nonconstant integer-valued $P$, which of course implies it is asymptotically totally ergodic, which by Proposition~\ref{ATE for cyclic groups} implies $\lpf{N_m} \to \infty$.
	
	Towards another consequence, recall that if $A$ and $B$ are subsets of $\Z/N\Z$, then the sum set $A+B$ is the set $\{a+b: a\in A, b \in B\} \subseteq \Z/N\Z$. We extract from Theorem~\ref{main thm for cyclic groups} the following combinatorial statement, from which other statements follow and which is of independent interest:
	\begin{thmrep}[\ref{quantitative main thm for cyclic groups}] Let $\delta \in (0,1]$ and $P (n)$ be an integer-valued polynomial with degree $d > 1$. There exists a constant $C = C(P,\delta)$ such that the following hold.
		\begin{enumerate}
			\item For any integer $N > 1$ with $\lpf N > C$ and any subsets $A$ and $B$ of $\Z/N\Z$ such that $|A||B| \geq \delta N^2$, the set $A+B$ contains an element of the form $P(m)$ for some $m \in \Z/N\Z$.
			\item More precisely, for any $\ve \in (0,1]$, for any integer $N > 1$, for any subsets $A$ and $B$ of $\Z/N\Z$ such that $|A||B| \geq \delta N^2$, if
			\be
			\lpf N \ > \ \frac{C}{\ve^{2^{d-1}}},
			\ee
			then the number $s$ of pairs $(n,m) \in \Z/N\Z \times \Z/N\Z$ such that $n+P(m) \in A$ and $n \in B$ satisfies $|s-|A||B|| < \ve|A||B|$. 
		\end{enumerate}
	\end{thmrep}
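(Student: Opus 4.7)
The plan is to rewrite the count $s$ as an inner product, apply Cauchy--Schwarz, and control the resulting $L^2$ error of the polynomial ergodic average by a quantitative form of Theorem~\ref{main thm for cyclic groups}. Part 1 then follows from Part 2 by taking $\ve = 1/2$ (to guarantee $s>0$) and applying Part 2 with $B$ replaced by $-B$, so that $A-(-B)=A+B$ and the density hypothesis is preserved.

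For Part 2, expand $s=\sum_{n,m}\mathbf{1}_B(n)\mathbf{1}_A(n+P(m))$ and recognize the inner average over $m$ as $A_N\mathbf{1}_A(n):=\frac{1}{N}\sum_{m=1}^N T_N^{P(m)}\mathbf{1}_A(n)$; using the normalized inner product on $\Z/N\Z$, this gives $s=N^2\langle\mathbf{1}_B,A_N\mathbf{1}_A\rangle$. Since $\int\mathbf{1}_A=|A|/N$, the natural main term is $N^2\langle\mathbf{1}_B,\int\mathbf{1}_A\rangle=|A||B|$, and Cauchy--Schwarz yields
\[
|s-|A||B|| \;\le\; N^2\,\|\mathbf{1}_B\|\,\bigl\|A_N\mathbf{1}_A-\textstyle\int\mathbf{1}_A\bigr\| \;=\; N^{3/2}\sqrt{|B|}\,\eta_N,
\]
where $\eta_N:=\|A_N\mathbf{1}_A-\int\mathbf{1}_A\|$.

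The heart of the matter is extracting a rate from Theorem~\ref{main thm for cyclic groups}. I expect its proof to yield a bound of the form $\eta_N\le c_P\,(\lpf{N})^{-1/2^{d-1}}$, the exponent $2^{d-1}$ arising from $d-1$ iterations of a van der Corput inequality: each iteration reduces the degree of $P$ by one at the cost of squaring the norm in question, terminating in a linear average $\frac{1}{N}\sum_{n=1}^N T_N^{an+c}f$ whose deviation from $\int f$ vanishes identically on $\Z/N\Z$ once $\lpf{N}$ exceeds the relevant coefficient. This bookkeeping through the van der Corput descent, together with optimizing the auxiliary parameter at each step, is the main technical obstacle; everything else is formal.

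Granting such a rate and substituting into the Cauchy--Schwarz bound, the requirement $|s-|A||B|| < \ve|A||B|$ becomes $(\lpf{N})^{1/2^{d-1}} > c_P/(\ve\,(|A|/N)\sqrt{|B|/N})$. With $b:=|B|/N\in(0,1]$ we have $\sqrt{b}\ge b$, so $(|A|/N)\sqrt{|B|/N}\ge(|A|/N)(|B|/N)\ge\delta$; hence $\lpf{N}>(c_P/(\ve\delta))^{2^{d-1}}$ suffices, and absorbing the dependence on $P$ and $\delta$ into a single constant $C=C(P,\delta)$ gives the stated condition $\lpf{N}>C/\ve^{2^{d-1}}$.
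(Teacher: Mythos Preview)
Your proposal is correct and matches the paper's approach: the paper likewise extracts the rate $\eta_N \le ((d-1)/\lpf N)^{1/2^{d-1}}$ from inequality~\eqref{label me 9} in the proof of Theorem~\ref{main thm for cyclic groups}, inserts it into the Cauchy--Schwarz estimate for $s - |A||B| = N^2\langle 1_B, A_N 1_A - \int 1_A\rangle$, and deduces Part~1 from Part~2 (with $\ve = 1$ rather than your $\ve = 1/2$). Your constant differs from the paper's stated $C=\max\{C_P,\tfrac{d-1}{4}\delta^{-2^{d-2}}\}$ only in the exponent of $\delta$, which is immaterial since the statement merely asserts the existence of some $C(P,\delta)$.
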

	\noindent In Theorem~\ref{quantitative main thm for cyclic groups}, the first statement follows on taking $\ve = 1$ in the second statement, which quantifies when the number of the pairs $(n,m)$ is close to being as large as one would expect if the polynomial $P$ behaved like a permutation. If we take $B = -A$ in the first statement, we obtain the following corollary as a special case.
	\begin{corollary} Fix $\delta > 0$ and an nonconstant integer-valued polynomial $P(n)$. If $N$ is such that $\lpf{N}$ is sufficiently large and $A \subset \Z/N\Z$ satisfies $|A| \geq \delta N$, then $(A-A) \cap P(\Z)$ is nonempty.
	\end{corollary}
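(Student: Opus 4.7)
The plan is to obtain the corollary as an immediate specialization of Theorem~\ref{quantitative main thm for cyclic groups}(1) with the choice $B := -A = \{-a : a \in A\}$. With this choice, $|B| = |A| \geq \delta N$, so $|A||B| \geq \delta^2 N^2$. Applying part (1) of that theorem with density parameter $\delta^2$ yields a constant $C = C(P, \delta^2)$ such that, whenever $\lpf{N} > C$, the sumset $A + B = A - A$ contains some element of the form $P(m) \bmod N$ for some $m \in \Z/N\Z$. This is exactly the assertion that $(A - A) \cap P(\Z)$ is nonempty modulo $N$.

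The only point requiring separate attention is that Theorem~\ref{quantitative main thm for cyclic groups} is stated only for polynomials of degree $d > 1$, whereas the corollary allows any nonconstant integral $P$. The linear case $P(n) = an + b$ with $a \neq 0$ is trivial: $0 \in A - A$ always, and whenever $\lpf{N} > |a|$ the integer $a$ is invertible modulo $N$, so one can solve $am \equiv -b \pmod{N}$, exhibiting $0$ as $P(m) \bmod N$. Thus the linear case needs no machinery.

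I anticipate no substantive obstacle in this reduction; all of the ergodic and combinatorial work is carried by Theorem~\ref{quantitative main thm for cyclic groups}. The ``hard part'' is really just confirming that $A + (-A)$ and $A - A$ coincide as subsets of $\Z/N\Z$ and that the constant $C(P, \delta^2)$ produced by Theorem~\ref{quantitative main thm for cyclic groups} depends only on $P$ and $\delta$ (not on $N$), so that the phrase ``$\lpf{N}$ sufficiently large'' in the corollary statement is uniform in $N$. Once these trivial checks are in place, the corollary follows in one line from the theorem.
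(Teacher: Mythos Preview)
Your proposal is correct and follows essentially the same approach as the paper: the paper also derives the corollary by taking $B = -A$ in Theorem~\ref{quantitative main thm for cyclic groups}(1). Your explicit treatment of the linear case $\deg P = 1$ (which Theorem~\ref{quantitative main thm for cyclic groups} excludes) is a small but legitimate addition that the paper's one-line justification glosses over.
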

	The above corollary is a finitary, modular version of the \fs theorem, which has the additional benefit of not requiring intersectivity of $P$. This result should be juxtaposed with the polynomial recurrence results in finite fields in \cite[Theorem B]{mccutch} or \cite[Theorem 5.16]{blm}, where intersectivity is essential. 
	Finally, we have the following corollary:
	\begin{correp}[\ref{nice cor 2}] Fix a real number $\delta \in (0,1]$ and an integer-valued polynomial $P(n)$ with $\deg(P) > 1$. For any integer $N > 1$ with $\lpf N$ sufficiently large, for any subsets $A$ and $B$ of $\Z/N\Z$ such that $|A||B| \geq \delta N^2$, one has $\Z/N\Z = A + B + S$, where $S = \{ P(n) : 1 \leq n \leq N \} \subset \Z/N\Z$.
	\end{correp}
	This corollary is only interesting when $\deg(P) > 1$, hence the restriction. It is natural to ask in which cases Corollary~\ref{nice cor 2} is nontrivial and whether it may be strengthened to a form that requires fewer than three sets or that does not require one of the sets to be the image set of an integer-valued polynomial. After the proof of Corollary~\ref{nice cor 2} in Section~\ref{advanced cyclic group results}, we discuss some known cases and several of these hypothetical strengthenings and show why the latter are not possible. It is interesting that even in the case that $A = B$ is an arithmetic progression modulo $N$, adding $S$ is eventually sufficient to cover all of the residues, as if the regular pattern of $A+B$ is ``mixed'' by $S$.
	
	The results of this article complement and partially extend those of $\cite{bbi}$, in which similar problems over finite fields were considered in a more historically minded manner.
	
	The article is structured as follows. In Section~\ref{advanced cyclic group results}, we review preliminary material, give a proof, and describe consequences of Theorem~\ref{main thm for cyclic groups}. In Section~\ref{basic cyclic group results}, we give examples, without the use of Theorem~\ref{main thm for cyclic groups}, of two kinds of sequences $(\mathsf{X}_m)$ that have Property $n^2$-LA. In Section~\ref{counterexamples}, we collect other observations of a finitary nature which give a sharper outline to the phenomenon of asymptotic total ergodicity.

		\section{Modular rings and Property $P$-LA}\label{advanced cyclic group results}
		Before we prove Theorem~\ref{main thm for cyclic groups}, we recall some basic Fourier-analytic facts in the setting of $\Z/N\Z$ and state some additional lemmas.
		\begin{defi} \label{fourier coeff}
			Fix a positive integer $N$ and a function $f : \Z/N\Z \to \C$. Define $\hat{f}(j)$, its \emph{Fourier coefficient at} $j \in \Z/N\Z$, by
			\be
			\hat{f}(j) := \frac 1N \sum_{m=1}^N e^{-2\pi i jm/N} f(m).
			\ee
		\end{defi}
		\begin{prop} Suppose $f$ is as in Definition \ref{fourier coeff}.
			\begin{enumerate}
				\item (Plancherel's theorem) One has \be
				\sum_{j=1}^N \left\vert \hat{f}(j) \right\vert^2 \ = \ \frac 1N \sum_{m=1}^N \left\vert f(m) \right\vert^2.
				\ee
				\item (Fourier inversion formula) For each $m \in \Z/N\Z$, one has
				\be
				f(m) \ = \ \sum_{j=1}^N e^{2\pi i j m/N} \hat{f}(j).
				\ee
			\end{enumerate}
		\end{prop}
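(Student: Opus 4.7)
The plan is to derive both identities from the single orthogonality relation for the additive characters of $\Z/N\Z$,
\be
\frac{1}{N} \sum_{j=1}^N e^{2\pi i j k /N} \ = \ \begin{cases} 1, & k \equiv 0 \pmod N, \\ 0, & \text{otherwise}, \end{cases}
\ee
which is immediate from the finite geometric series formula (the numerator $e^{2\pi i k}-1$ vanishes, while in the nontrivial case $k\not\equiv 0 \pmod N$ the denominator $e^{2\pi i k/N}-1$ does not). This identity is really the only tool needed.

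For the Fourier inversion formula (item 2), I would substitute Definition~\ref{fourier coeff} into the right-hand side and exchange the two finite sums to obtain
\be
\sum_{j=1}^N e^{2\pi i jm/N}\, \hat{f}(j) \ = \ \sum_{\ell=1}^N f(\ell) \cdot \frac{1}{N} \sum_{j=1}^N e^{2\pi i j(m-\ell)/N}.
\ee
By the orthogonality relation applied with $k = m-\ell$, the inner sum equals $1$ when $\ell = m$ and $0$ otherwise, so the expression collapses to $f(m)$, as claimed.

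For Plancherel (item 1), I would bootstrap from inversion. Expanding $|f(m)|^2 = f(m)\overline{f(m)}$, replacing the first factor $f(m)$ by its inversion representation, and averaging over $m$ gives
\be
\frac{1}{N} \sum_{m=1}^N |f(m)|^2 \ = \ \sum_{j=1}^N \hat{f}(j) \cdot \frac{1}{N} \sum_{m=1}^N e^{2\pi i jm/N}\,\overline{f(m)} \ = \ \sum_{j=1}^N \hat{f}(j)\,\overline{\hat{f}(j)},
\ee
since the inner $m$-sum is precisely $\overline{\hat{f}(j)}$ by Definition~\ref{fourier coeff}. There is no substantial obstacle beyond tracking the asymmetric normalization: the factor $1/N$ sits on the $\hat f$ side but not on the inverse transform side, and orthogonality automatically supplies the correct factors of $N$.
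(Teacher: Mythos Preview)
Your argument is correct and is the standard derivation of these identities from the orthogonality of characters. The paper itself does not supply a proof of this proposition; it is stated as background without justification, so there is nothing to compare against beyond noting that your proof fills in exactly the routine verification the authors omitted.
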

		
		\begin{lemma} \label{exp sums lemma}Let $d$, $N$, and $j$ be positive integers such that $j \in \{1,\ldots,N-1\}$. Then
			\be
			\left\vert \frac{1}{N^{d+1}} \sum_{n,h_1,\ldots,h_{d}=1}^N e^{2\pi i n \left( \prod_{i=1}^d h_i\right)j/N} \right\vert \ \leq \ \frac{d}{\lpf N}.
			\ee
		\end{lemma}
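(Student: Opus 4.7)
The plan is to first evaluate the innermost sum over $n$ in closed form, which collapses the trigonometric problem into a purely arithmetic counting question. For each fixed $(h_1,\ldots,h_d)$, the standard geometric sum identity gives
\be
\frac{1}{N}\sum_{n=1}^N e^{2\pi i n\left(\prod_i h_i\right) j/N} \ = \ \begin{cases} 1 & \text{if } N \mid j\prod_i h_i, \\ 0 & \text{otherwise.} \end{cases}
\ee
Hence the left-hand side of the lemma equals the nonnegative quantity $N^{-d} \cdot \#\{(h_1,\ldots,h_d) \in \{1,\ldots,N\}^d : N \mid j\prod_i h_i\}$, and the absolute value bars become superfluous.

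Next I would transfer the divisibility condition to its coprime part. Set $g := \gcd(j,N)$ and $N_1 := N/g$. Then $N \mid j\prod_i h_i$ is equivalent to $N_1 \mid (j/g)\prod_i h_i$, and since $\gcd(j/g, N_1)=1$ by construction, this in turn is equivalent to $N_1 \mid \prod_i h_i$. The assumption $1 \leq j \leq N-1$ forces $g < N$, so $N_1 > 1$; and because $N_1$ divides $N$, every prime factor of $N_1$ is a prime factor of $N$. In particular, the least prime factor $p$ of $N_1$ satisfies $p \geq \lpf{N}$.

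The main estimate is then a union bound. If $N_1 \mid \prod_i h_i$ then $p \mid \prod_i h_i$, and since $p$ is prime, $p$ must divide at least one of the $h_i$. For each fixed index $i$, the number of tuples in $\{1,\ldots,N\}^d$ with $p \mid h_i$ is exactly $(N/p) \cdot N^{d-1}$, so the union over $i = 1,\ldots,d$ has cardinality at most $d N^d/p$. Dividing by $N^d$ yields the desired bound $d/p \leq d/\lpf{N}$.

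I do not anticipate any real obstacle: the argument is a short blend of orthogonality and elementary counting. The only substantive points to verify carefully are that $N_1 > 1$ (using $j \not\equiv 0 \bmod N$) and that $\lpf{N_1} \geq \lpf{N}$ (using $N_1 \mid N$); once these are in hand, the geometric sum computation and the one-line union bound finish the proof.
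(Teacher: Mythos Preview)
Your argument is correct. Both you and the paper begin by collapsing the sum over $n$ via orthogonality, reducing the problem to counting tuples $(h_1,\ldots,h_d)$ with $N \mid j\prod_i h_i$; the difference lies in how that count is bounded. The paper proceeds by induction on $d$: it peels off $h_d$, separates the cases $h_dj \equiv 0$ and $h_dj \not\equiv 0 \bmod N$, bounds the former by $\gcd(j,N)/N \leq 1/\lpf N$, and applies the inductive hypothesis to the latter with $h_dj$ in place of $j$. You instead pass once to the coprime modulus $N_1 = N/\gcd(j,N)$, pick its smallest prime $p \geq \lpf N$, and apply a single union bound over the events $p \mid h_i$. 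Your route is a bit more direct, avoiding the inductive scaffolding altogether; the paper's route keeps track of the exact count $\gcd(\cdot,N)/N$ at each level, though it ultimately relaxes this to $1/\lpf N$ anyway, so neither approach yields a sharper constant than the other.
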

		\begin{proof} We argue by induction on $d$. First suppose $d = 1$; for simplicity we'll write $h$ for $h_1$. Fix positive integers $N$ and $j$ such that $j \in \{1,\ldots, N-1\}$. Recall that if $H \in \Z$, then
			\be
			\sum_{n=1}^N e^{2\pi i n H / N} = \begin{cases} 0 & \text{if } H \not\equiv 0 \bmod N \\ N & \text{if } H \equiv 0 \bmod N \end{cases}.
			\ee
			It follows that
			\be
			\frac{1}{N^2} \sum_{n,h = 1}^N e^{2\pi i n h j/N} \ = \ \frac{1}{N^2} \sum_{\substack{h = 1 \\ h j  \equiv 0 \bmod N}}^N N \ = \ \frac{1}{N} \left| \{ h \in \{1, \ldots, N\} : h j \equiv 0 \bmod N\} \right|.
			\ee
			Write $\delta := \gcd (j,N)$. Let $a$ be the minimal positive integer such that $aj \equiv 0 \bmod N$. 
			Then $a = N/\delta$. Hence $|\{ h \in \{1,\ldots, N\} : hj \equiv 0 \bmod N\}| = |\{a,2a,\ldots, \delta a\}| = \delta$. Writing $N = p_1^{a_1}\cdots p_\ell^{a_\ell}$ with the primes $p_1 < p_2 < \cdots < p_\ell$ in ascending order, we observe that since $1 \leq j \leq N-1$, we have $\delta = \gcd(j,N) \leq N/p_1 = N/\lpf N$. Thus
			\be
			\left\vert \frac{1}{N^2} \sum_{n,h = 1}^N e^{2\pi i n h j/N} \right\vert \ = \ \frac{1}{N} \left| \{ h \in \{1, \ldots, N\} : h j \equiv 0 \bmod N\} \right| \ = \ \frac{\delta}{N} \ \leq \ \frac{1}{\lpf N}.
			\ee
			Now suppose $d \geq 2$ and that the statement holds for $d - 1$. Fix positive integers $N$ and $j$ with $j \in\{1,\ldots, N-1\}$. In particular, the induction hypothesis asserts that
			\be
			\left\vert \frac{1}{N^{d}} \sum_{n,h_1,\ldots,h_{d-1}=1}^N e^{2\pi i n \left( \prod_{i=1}^{d-1} h_i\right) (h_dj) /N} \right\vert \ \leq \ \frac{d-1}{\lpf N}
			\ee
			for any $h_d \in \Z$ such that $h_d j \not\equiv 0 \bmod N$. After some manipulation, we observe that
			\begin{align}
				& \left\vert \frac{1}{N^{d+1}} \sum_{n,h_1,\ldots,h_{d}=1}^N e^{2\pi i n \left( \prod_{i=1}^{d} h_i\right) j/N} \right\vert \\
				& \leq \ \frac{1}{N} \sum_{h_d = 1}^N \left\vert \frac{1}{N^{d}} \sum_{n,h_1,\ldots,h_{d-1}=1}^N e^{2\pi i n \left( \prod_{i=1}^{d} h_i\right) j /N} \right\vert \\
				& = \ \frac{1}{N} \sum_{\substack{h_d = 1 \\ h_d j \equiv 0 \bmod N}}^N 1 \quad + \frac{1}{N} \sum_{\substack{h_d = 1 \\ h_d j \not\equiv 0 \bmod N}}^N \left\vert \frac{1}{N^{d}} \sum_{n,h_1,\ldots,h_{d-1}=1}^N e^{2\pi i n \left( \prod_{i=1}^{d} h_i\right)j/N} \right\vert \\
				& = \ \frac{\left| \{ h_d : h_d j \equiv 0 \bmod N\} \right|}{N} + \frac{1}{N} \sum_{\substack{h_d = 1 \\ h_d j \not\equiv 0 \bmod N}}^N \left\vert \frac{1}{N^{d}} \sum_{n,h_1,\ldots,h_{d-1}=1}^N e^{2\pi i n \left( \prod_{i=1}^{d-1} h_i\right)(h_d j)/N} \right\vert \\
				& \leq \ \frac{1}{\lpf N} + \frac{d-1}{\lpf N} \ \leq \ \frac{d}{\lpf N},
			\end{align}
			completing the proof.
		\end{proof}
		
		\begin{lemma}\label{lpf bound} Let $d$ and $N$ be positive integers and let $f : \Z/N\Z \to \C$. Let $T$ be the map $n \mapsto n+1 \bmod N$. Then
			\be
			\left\vert \frac{1}{N^{d}} \sum_{h_1,\ldots,h_{d} = 1}^{N} \frac 1N \sum_{n=1}^{N} \lr{T^{n\prod_{i=1}^{d} h_i}f,f} \right\vert \ \leq \ \left|\int f \ d \mu\right|^2 + \frac{d}{\lpf N} \norm{f}^2.
			\ee
		\end{lemma}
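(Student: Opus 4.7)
The plan is to prove the lemma by spectral decomposition via the Fourier transform, isolating the contribution of the trivial frequency and controlling the nontrivial frequencies using Lemma~\ref{exp sums lemma} together with Plancherel's theorem.

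First, I would expand $f$ via the Fourier inversion formula as $f = \sum_{j=1}^N \hat{f}(j)\, \chi_j$, where $\chi_j(m) := e^{2\pi i j m/N}$. Since $T\chi_j = e^{2\pi i j/N}\chi_j$, iteration gives $T^k \chi_j = e^{2\pi i j k/N}\chi_j$, and the orthonormality of the characters with respect to $\lr{\cdot,\cdot}$ yields
\[
\lr{T^k f, f} \ = \ \sum_{j=1}^N |\hat{f}(j)|^2\, e^{2\pi i j k/N}
\]
for every integer $k$.

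Next, I would set $k = n\prod_{i=1}^d h_i$ and interchange the orders of summation to rewrite the quantity on the left-hand side of the lemma as $\sum_{j=1}^N |\hat{f}(j)|^2\, \Theta_j$, where
\[
\Theta_j \ := \ \frac{1}{N^{d+1}} \sum_{n,h_1,\ldots,h_d=1}^N e^{2\pi i j n \prod_{i=1}^d h_i / N}.
\]
The $j=N$ term (identified with $j=0$) contributes $\Theta_N = 1$ times $|\hat{f}(N)|^2 = \left|\int f\, d\mu\right|^2$. For $j \in \{1,\ldots,N-1\}$, Lemma~\ref{exp sums lemma} supplies the key estimate $|\Theta_j| \leq d/\lpf{N}$.

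Finally, I would apply the triangle inequality and Plancherel's theorem to conclude
\[
\left|\sum_{j=1}^N |\hat{f}(j)|^2\, \Theta_j\right| \ \leq \ \left|\int f\, d\mu\right|^2 + \frac{d}{\lpf{N}} \sum_{j=1}^{N-1} |\hat{f}(j)|^2 \ \leq \ \left|\int f\, d\mu\right|^2 + \frac{d}{\lpf{N}}\, \norm{f}^2,
\]
which is the desired bound. Because the delicate oscillatory estimate has already been extracted in Lemma~\ref{exp sums lemma}, no genuine obstacle arises here; the argument is essentially a bookkeeping exercise with the spectral decomposition, the main subtlety being to correctly split off the trivial character and not overcount it when invoking Plancherel.
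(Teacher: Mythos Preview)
Your proof is correct and follows essentially the same approach as the paper: both arguments expand via the Fourier transform to obtain $\sum_j |\hat f(j)|^2$ times an exponential average, separate off the trivial frequency $j=N$ to produce $|\int f\,d\mu|^2$, bound the nontrivial frequencies by Lemma~\ref{exp sums lemma}, and finish with Plancherel. The only cosmetic difference is that you compute $\lr{T^k f,f}$ directly from the spectral decomposition $f=\sum_j \hat f(j)\chi_j$, whereas the paper first expands the inner product as $\frac1N\sum_m f(m+H)\overline{f(m)}$ and then applies Fourier inversion to $f(m+H)$; these are two ways of writing the same computation.
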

		\begin{proof} Write $H = H(n,h_1,\ldots, h_d) := n\prod_{i=1}^d h_i$. Justifying steps afterwards, we have
			\begin{align}
				\frac{1}{N^{d}} & \sum_{h_1,\ldots,h_{d} = 1}^{N} \frac 1N \sum_{n=1}^N \lr{T^{n\prod_{i=1}^{d} h_i}f,f} \nonumber \\
				& = \ \frac{1}{N^{d+1}} \sum_{n,h_1,\ldots, h_d = 1}^N \lr{T^{H}f,f}\\
				& = \ \frac{1}{N^{d+1}} \sum_{n,h_1,\ldots,h_{d} = 1}^{N} \frac{1}{N} \sum_{m=1}^N f(m+H)\overline{f(m)} \label{blob 1} \\
				& = \ \frac{1}{N^{d+1}} \sum_{j,n,h_1,\ldots,h_{d} = 1}^{N} \widehat{f}(j) e^{2\pi i H j/N} \left( \frac{1}{N} \sum_{m=1}^N e^{2\pi i j m/N} \overline{f(m)} \right) \label{blob 2} \\
				& = \ \frac{1}{N^{d+1}} \sum_{j,n,h_1,\ldots,h_{d} = 1}^{N} \left\vert \widehat{f}(j) \right\vert^2 e^{2\pi i H j/N} \label{blob 3}\\
				& = \ \left\vert \int f \ d\mu \right\vert^2 + \sum_{j=1}^{N-1}\left\vert \widehat{f}(j) \right\vert^2 \frac{1}{N^{d+1}} \sum_{n,h_1,\ldots,h_{d} = 1}^{N} e^{2\pi i H j/N}. \label{blob 4}
			\end{align}
			To get Equation \eqref{blob 1}, we expand the inner product. 
			To get Equations~\eqref{blob 2}~and~\eqref{blob 3}, we apply the Fourier inversion formula to $f(m+H)$ and change the order of summation to highlight the expression for $\overline{\widehat{f}(j)} = \widehat{f}(-j)$. To get Equation \eqref{blob 4}, we separate out the $j = N$ term and reorder the sums again in anticipation of what comes now. By Lemma \ref{exp sums lemma} and Plancherel's theorem, we conclude that
			\begin{align}
				\Bigg\vert \frac{1}{N^{d}} & \sum_{h_1,\ldots,h_{d} = 1}^{N} \frac 1N \sum_{n=1}^{N} \lr{T^{n\prod_{i=1}^{d} h_i}f,f} \Bigg\vert \ \leq \ \left\vert \int f \ d\mu \right\vert^2 + \sum_{j=1}^{N-1} \left\vert \widehat{f}(j) \right\vert^2 \left\vert \frac{1}{N^{d+1}} \sum_{n,h_1,\ldots,h_{d} = 1}^{N} e^{2\pi i H j/N}  \right\vert \nonumber \\
				& \leq \ \left\vert \int f \ d\mu \right\vert^2 + \frac{d}{\lpf N} \sum_{j=1}^N \left\vert \widehat{f}(j) \right\vert^2 \\
				& = \ \left\vert \int f \ d\mu \right\vert^2 + \frac{d}{\lpf N} \left( \frac 1N \sum_{m=1}^N \left\vert f(m) \right\vert^2 \right) \\
				& = \ \left\vert \int f \ d\mu \right\vert^2 + \frac{d}{\lpf N} \norm{f}^2,
			\end{align}
			as desired.
		\end{proof}
		
		We will need to difference a polynomial until it is linear. To this end, we first define some notation: For $a : \Z \to \Z$ and $h \in \Z$, define $\Delta_1 (a(x);h) := a(x+h) - a(x)$ and inductively define $\Delta_{j+1} (a(x); h_1,\ldots, h_{j+1}) := \Delta_1 ( \Delta_j (a(x);h_1,\ldots, h_j); h_{j+1})$. Now, if we are given a polynomial $P \in \Z[n]$ and parameters $h_1, h_2, \ldots, h_{k-1} \in \N$, it follows that the expressions $\Delta_1(P(n);h_1)$, $\Delta_2(P(n);h_1,h_2)$, ..., $\Delta_{k-1}(P(n);h_1,h_2, \ldots, h_{k-1})$ are respectively the degree at most $k-1$, degree at most $k-2$, ..., and degree at most $1$ polynomials in $n$ obtained by differencing $P(n)$ by $h_1$, then the result by $h_2$, and so on, down to a polynomial of degree at most 1.
		\begin{lemma} \label{vdc lemma} Let $N > 1$ be an integer. For any function $f : \Z/N\Z \to \C$ with $||f|| \leq 1$ and any function $P : \Z \to \Z$ satisfying $P(n+N) \equiv P(n) \bmod N$ for any integer $n$,
			\be \label{induction 1}
			\left\vert\left\vert \nsum n T^{P(n)}f \right\vert\right\vert^{2^d} \ \leq \ \frac{1}{N^d} \sum_{h_1,\ldots,h_d = 1}^N  \lr{\nsum n T^{\Delta_d(P(n);h_1,\ldots,h_d)}f , f}.
			\ee
		\end{lemma}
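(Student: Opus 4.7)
The inequality has the shape of a classical van der Corput / polynomial exhaustion (PET-style base case) estimate, so I will prove it by induction on $d$. The unitarity of $T$ and the periodicity hypothesis $P(n+N) \equiv P(n) \pmod N$ are the two ingredients that make the differencing step go through cleanly; the latter is what lets one change variables modulo $N$ without any boundary terms.

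\textbf{Base case $d = 1$.} Expand the square of the norm as an inner product and use $\lr{T^{P(n)}f, T^{P(m)}f} = \lr{T^{P(n) - P(m)}f, f}$ (since $T$ is unitary and $T^*=T^{-1}$) to obtain
\[
\left\vert\left\vert \tfrac 1N \sum_{n=1}^N T^{P(n)}f \right\vert\right\vert^2 \;=\; \tfrac{1}{N^2}\sum_{n,m=1}^N \lr{T^{P(n)-P(m)}f,f}.
\]
Then substitute $n = m + h_1$ and use the periodicity hypothesis on $P$ modulo $N$ to rewrite $P(m+h_1) - P(m) = \Delta_1(P(m);h_1)$ inside the exponent, yielding the $d=1$ case with equality.

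\textbf{Inductive step.} Assume the inequality for $d$ and square both sides to pass to the $2^{d+1}$ power on the left. Apply Cauchy--Schwarz to the average over $(h_1,\ldots,h_d)$ of the inner products, which gives
\[
\Bigl(\tfrac{1}{N^d}\sum_{h_1,\ldots,h_d}\lr{\tfrac 1N \sum_n T^{\Delta_d(P(n);h_1,\ldots,h_d)}f,f}\Bigr)^2 \;\leq\; \tfrac{1}{N^d}\sum_{h_1,\ldots,h_d}\Bigl|\lr{\tfrac 1N \sum_n T^{\Delta_d(P(n);h_1,\ldots,h_d)}f,f}\Bigr|^2.
\]
Next apply Cauchy--Schwarz to each inner product, using $\|f\|\leq 1$, and then apply the $d=1$ case (with the polynomial $P$ replaced by $\Delta_d(P(\cdot);h_1,\ldots,h_d)$, whose $N$-periodicity mod $N$ is preserved under differencing) to each resulting squared norm. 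This introduces a new averaging variable $h_{d+1}$ and the nested differencing $\Delta_1(\Delta_d(P(n);h_1,\ldots,h_d);h_{d+1})=\Delta_{d+1}(P(n);h_1,\ldots,h_{d+1})$, completing the induction.

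\textbf{Main obstacle.} This argument is essentially routine; the only point requiring care is the bookkeeping of the differencing parameters together with the verification that the periodicity hypothesis is inherited by every $\Delta_j(P(\,\cdot\,);h_1,\ldots,h_j)$, so that one may legitimately re-invoke the $d=1$ case inside the induction. Once this is observed, one need only apply Cauchy--Schwarz twice per inductive step and fold in the base case.
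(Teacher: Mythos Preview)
Your proposal is correct and follows essentially the same route as the paper's proof: the base case is handled by expanding the squared norm, using unitarity of $T$, and reindexing via the periodicity hypothesis, while the inductive step squares the hypothesis, applies Cauchy--Schwarz first to the average over $(h_1,\ldots,h_d)$ and then to each inner product (using $\|f\|\le 1$), and finally invokes the $d=1$ case on each $\Delta_d(P(\,\cdot\,);h_1,\ldots,h_d)$ to introduce $h_{d+1}$. The paper compresses your two Cauchy--Schwarz applications into a single line, but the argument is otherwise identical.
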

		\begin{proof} We induct on $d$. Suppose $d = 1$. Then
			\begin{align}
				\left\vert\left\vert \nsum n T^{P(n)}f \right\vert\right\vert^2 \ & = \lr{\nsum {n'} T^{P(n')}f,\nsum n T^{P(n)}f} \label{label me 2}\\
				& = \ \nsum {n'} \nsum n \lr{T^{P(n')}f,T^{P(n)}f} \\
				& = \ \nsum {h_1} \nsum n \lr{T^{P(n+h_1)-P(n)}f,f} \\
				& = \ \nsum {h_1} \lr{\nsum n T^{\Delta_1(P(n);h_1)}f,f} \label{label me 3}.
			\end{align}
			Suppose Inequality \eqref{induction 1} holds for $d$. Then
			\begin{align}
				\left\vert\left\vert \nsum n T^{P(n)}f \right\vert\right\vert^{2^{d+1}} \ & = \ \left( \left\vert\left\vert \nsum n T^{P(n)}f \right\vert\right\vert^{2^d}\right)^2 \\
				& \leq \ \left( \frac{1}{N^d} \sum_{h_1,\ldots,h_d = 1}^N \lr{\nsum n T^{\Delta_d(P(n);h_1,\ldots,h_d)}f , f} \right)^2 \\
				& \leq \ \frac{1}{N^d} \sum_{h_1,\ldots,h_d = 1}^N \left\vert\left\vert \nsum n T^{\Delta_d(P(n);h_1,\ldots,h_d)}f \right\vert\right\vert^2 ||f||^2 \\
				& \leq \ \frac{1}{N^d} \sum_{h_1,\ldots,h_d = 1}^N \left\vert\left\vert \nsum n T^{\Delta_d(P(n);h_1,\ldots,h_d)}f \right\vert\right\vert^2 \\
				& = \ \frac{1}{N^d} \sum_{h_1,\ldots,h_d = 1}^N \nsum {h_{d+1}} \lr{\nsum n T^{\Delta_1(\Delta_{d}(P(n);h_1,\ldots,h_d);h_{d+1})}f,f} \\
				& = \ \frac{1}{N^{d+1}} \sum_{h_1,\ldots,h_{d+1} = 1}^N  \lr{\nsum n T^{\Delta_{d+1}(P(n);h_1,\ldots,h_{d+1})}f , f},
			\end{align}
			which proves that Inequality \eqref{induction 1} holds for $d+1$.
		\end{proof}
		
		We now prove the main theorem:
		\begin{thm}\label{main thm for cyclic groups} Let $(N_m)$ be a sequence of positive integers such that $\limi m \lpf{N_m} = \infty$. For each $m \in \N$, let $\mathsf{X}_m$ be the rotation on $N_m$ points. Then, for each nonconstant integer-valued polynomial $P(n)$, we have 
			\[ \limi m \sup_{\norm{f_m}\leq 1}\left\vert\left\vert \nmsum n T^{P(n)}_m f_m - \int f_m  \right\vert\right\vert = 0,\]
			where the supremum is over functions $f_m : \Z/N_m\Z \to \C$ such that $\norm{f_m} \leq 1$.
		\end{thm}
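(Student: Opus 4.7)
The plan is a Fourier-analytic argument structured around the lemmas above. First I would reduce to mean-zero functions: setting $g_m = f_m - \int f_m$, we have $\int g_m = 0$, $\|g_m\| \leq \|f_m\| \leq 1$, and $\frac{1}{N_m}\sum_n T_m^{P(n)} f_m - \int f_m = \frac{1}{N_m}\sum_n T_m^{P(n)} g_m$. The theorem reduces to showing
\[\limi m \sup \left\|\frac{1}{N_m}\sum_n T_m^{P(n)}g_m\right\| = 0,\]
where the supremum is over mean-zero $g_m : \Z/N_m\Z \to \C$ with $\|g_m\| \leq 1$.

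Let $k = \deg P$. For $\lpf N_m > k$, a short computation (using $\binom{N}{j}\equiv 0 \bmod N$ whenever $\gcd(N,j)=1$) shows $P(n+N_m) \equiv P(n) \pmod{N_m}$, so the hypothesis of Lemma~\ref{vdc lemma} holds. I would then apply Lemma~\ref{vdc lemma} with $d = k-1$, obtaining
\[\left\|\frac{1}{N}\sum_n T^{P(n)}g\right\|^{2^{k-1}} \leq \frac{1}{N^{k-1}}\sum_{h_1,\ldots,h_{k-1}=1}^N \left\langle \frac{1}{N}\sum_n T^{\Delta_{k-1}(P(n);\,h_1,\ldots,h_{k-1})} g,\, g\right\rangle.\]
A direct computation on the leading term of $P$ shows that $\Delta_{k-1}(P(n);h_1,\ldots,h_{k-1}) = \alpha\,h_1\cdots h_{k-1}\cdot n + R(h_1,\ldots,h_{k-1})$, where $\alpha = k!\cdot c$ is a fixed nonzero integer ($c$ being the leading coefficient of $P$; $\alpha\in\Z$ since $P$ is integer-valued) and $R$ is a polynomial in the $h_i$ not involving $n$.

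It then remains to bound the resulting average by $C_P/\lpf N$, which is a mild variant of Lemma~\ref{lpf bound}. I would repeat its Fourier proof: expand $g(m) = \sum_j \hat{g}(j)\,e^{2\pi i jm/N}$, use that $T^a$ acts diagonally by $e^{2\pi i ja/N}$ on the $j$-th character, and note that the $j=N$ term vanishes because $\int g = 0$. For each $j\in\{1,\ldots,N-1\}$, the inner sum over $n$ isolates $e^{2\pi i jR(h)/N}$ as a unit-modulus factor independent of $n$; the triangle inequality then reduces matters to bounding $\bigl|\tfrac{1}{N^k}\sum_{n,h_1,\ldots,h_{k-1}}e^{2\pi i j\alpha h_1\cdots h_{k-1} n/N}\bigr|$. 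For $\lpf N_m > |\alpha|$, the integer $j\alpha$ reduces to a nonzero residue mod $N$, so Lemma~\ref{exp sums lemma} applies and gives the bound $(k-1)/\lpf N$. Summing over $j$ via Plancherel yields $\|\tfrac{1}{N}\sum T^{P(n)}g\|^{2^{k-1}} \leq (k-1)\|g\|^2/\lpf N$, and taking the $2^{k-1}$-th root completes the proof as $m\to\infty$.

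The main delicacy is that the differenced polynomial carries both an extra scalar $\alpha$ and an additive shift $R(h)$ not present in Lemma~\ref{lpf bound}; rather than invoking that lemma as a black box, I would reproduce its Fourier argument in this slightly generalized setting. The shift $R(h)$ contributes only a unit-modulus phase absorbed by the triangle inequality, while $\alpha$ is harmless once $\lpf N_m > |\alpha| = k!\,|c|$, a threshold depending only on $P$. The one preliminary step that might be overlooked is verifying the $N$-periodicity hypothesis of Lemma~\ref{vdc lemma} for our rational-coefficient $P$, which again requires only $\lpf N_m > \deg P$.
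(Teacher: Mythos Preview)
Your proof is correct and follows the paper's approach: reduce to mean zero, linearize via Lemma~\ref{vdc lemma}, and bound by the Fourier/exponential-sum argument underlying Lemmas~\ref{exp sums lemma} and~\ref{lpf bound}. The one tactical difference is how the linearized polynomial is massaged into the form needed for Lemma~\ref{exp sums lemma}. The paper first passes to integer coefficients via the permutation $n\mapsto c^{-1}n$ of $\Z/N\Z$, and then, after differencing, reindexes the $n$-sum by an explicit affine permutation $\phi_{h_1,\ldots,h_{k-1}}$ so that $\Delta_{k-1}(P(n);h)$ becomes exactly $n\prod_i h_i$ and Lemma~\ref{lpf bound} applies verbatim. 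You instead keep the rational coefficients, verify the $N$-periodicity hypothesis of Lemma~\ref{vdc lemma} directly, and carry both the scalar $\alpha=k!\,c$ and the shift $R(h)$ through a reproduced Fourier computation, absorbing $R(h)$ via the triangle inequality (harmless since the inner $n$-sum is nonnegative real) and $\alpha$ via the hypothesis $\lpf N>|\alpha|$. Both routes yield the same bound $((k-1)/\lpf N)^{2^{-(k-1)}}$. One small omission: you should treat $\deg P=1$ separately, as the paper does, since Lemma~\ref{vdc lemma} requires $d\ge 1$; that case is immediate once $\lpf N_m$ exceeds the (necessarily integer) leading coefficient.
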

		In the proof of this theorem, we will write $T$ for the map $n \mapsto n + 1$ modulo $N$, where the value of $N$ will always be clear from the context.
		\begin{proof}[Proof of Theorem~\ref{main thm for cyclic groups}]
			It suffices to show the statement of the theorem in the case that the $f_m$'s satisfy $\int f_m \ d\mu_m = 0$.
			
			Fix a polynomial $P \in \Q[n]$ that maps integers to integers. If $P$ has degree 1, then $P(n) = c_1n + c_0$ for some integers $c_1,c_0$, and hence the argument in the proof of Proposition \ref{ATE for cyclic groups} applies (since of course $n \mapsto n + c_0$ is a permutation of $\Z/N_m\Z$ for any $m$). Thus, suppose $P$ has degree $k \geq 2$. Without loss of generality, we may assume the coefficients are actually integers. Indeed, if the theorem statement holds for polynomials with integer coefficients and $P$ has at least one non-integer coefficient, fix some constant $c > 0$ such that $cP \in \Z[n]$. Let $N$ be such that $\lpf{N} > c$. Then the map $\varphi_c(m) := \inv c m$ permutes $\Z/N\Z$, so for any $f : \Z/N\Z \to \C$, it follows that
			\begin{align}
				\norm{\frac 1N \sum_{n=1}^N T^{P(n)} f }^2 \ & = \ \frac{1}{N^2} \sum_{n,n'=1}^N \lr{T^{P(n)}f,T^{P(n')}f} \\
				& = \ \frac{1}{N^2} \sum_{n,n'=1}^N \int f(\inv{c}(cm+cP(n)))\overline{f(\inv{c}(cm+cP(n')))} \ d\mu(m) \\
				& = \ \frac{1}{N^2} \sum_{n,n'=1}^N \lr{T^{cP(n)}(f \circ \varphi_c),T^{cP(n')}(f \circ \varphi_c)} \\
				& = \ \norm{\frac 1N \sum_{n=1}^N T^{cP(n)}(f\circ \varphi_c) }^2.
			\end{align}
			Moreover, we have $\int f\circ \varphi_c \ d\mu \ = \ \int f \ d\mu$, and $\norm{f} \leq 1$ if and only if $\norm{f \circ \varphi_c} \leq 1$. Since $\limi m \lpf{N_m} = \infty$, the previous statements imply
			\be
			\limi m \sup_{\norm{f_m}\leq 1}\left\vert\left\vert \nmsum n T^{P(n)}_m f_m \right\vert\right\vert \ = \ \limi m \sup_{\norm{f_m}\leq 1}\left\vert\left\vert \nmsum n T^{cP(n)}_m f_m \right\vert\right\vert.
			\ee
			
			Thus, write $P(n) = c_kn^k+c_{k-1}n^{k-1} + \cdots + c_1n + c_0$ with integers $c_0, \ldots, c_k$. Fix a positive integer $N$ and a function $f : \Z/N\Z \to \C$ with $\int f \ d\mu = 0$ and $\norm{f} \leq 1$.
			We seek to bound the expression $\left\vert\left\vert \nsum n T^{P(n)}f \right\vert\right\vert$.
			Applying Lemma~\ref{vdc lemma} with $d = k-1$, we have
			\be\label{label me 10}
			\left\vert\left\vert \nsum n T^{P(n)}f \right\vert\right\vert^{2^{k-1}} \ \leq \ \frac{1}{N^{k-1}} \sum_{h_1,\ldots,h_{k-1} = 1}^N  \lr{\nsum n T^{\Delta_{k-1}(P(n);h_1,\ldots,h_{k-1})}f , f}.
			\ee
			We know that $\Delta_{k-1}(P(n);h_1,\ldots,h_{k-1})$ is a polynomial in $n$ of degree at most 1, but it would help to determine it more precisely. When differencing $P(n)$, by induction one can show 
			that
			\begin{align}
				\Delta_{k-1}(P(n);h_1,\ldots,h_{k-1}) \ & = \ k!c_k \left( \prod_{i=1}^{k-1}h_i \right) n + \left(\prod_{i=1}^{k-1}h_i\right) \left( c_{k-1} (k-1)! + \frac{k!}{2} c_k \sum_{i=1}^{k-1} h_i \right) \nonumber \\
				& = \ \left( \prod_{i=1}^{k-1} h_i \right) \left( k!c_k n + \left( c_{k-1} (k-1)! + \frac{k!}{2} c_k \sum_{i=1}^{k-1} h_i\right) \right). \label{label me 8}
			\end{align}
			
			Note that the map $n \mapsto Cn$ is a permutation of $\Z/N\Z$ if and only if $C \in (\Z/N\Z)^\times$. If $N$ is such that $\lpf{N} > \max\{k,\text{the largest prime divisor of } |c_k|\}$, it follows that $k!c_k$ has a multiplicative inverse in $\Z/N\Z$; hence, for any choice of $h_i$'s, the map
			\be \phi_{h_1,\ldots, h_{k-1}}(n) := (k!c_k)^{-1}\left( n - \left( c_{k-1} (k-1)! + \frac{k!}{2} c_k \sum_{i=1}^{k-1} h_i\right)\right),
			\ee
			being a composition of permutations of $\Z/N\Z$, is a permutation of $\Z/N\Z$. By Equation~\eqref{label me 8}, we conclude, for any choice of $h_i$'s, that
			\be
			\Delta_{k-1}(P(n);h_1,\ldots,h_{k-1}) \circ \phi_{h_1,\ldots,h_{k-1}} \ = \ n \prod_{i=1}^{k-1} h_i.
			\ee
			Thus, by using the permutation $\phi_{h_1,\ldots,h_{k-1}}$ to reindex the sum over $n$ on the right-hand side of Inequality~\eqref{label me 10}, we obtain
			\be
			\left\vert\left\vert \nsum n T^{P(n)}f \right\vert\right\vert^{2^{k-1}} \ \leq \ \frac{1}{N^{k-1}} \sum_{h_1,\ldots,h_{k-1} = 1}^N  \lr{\nsum n T^{n\prod_{i=1}^{k-1} h_i}f , f}
			\ee
			Applying Lemma~\ref{lpf bound} with $d = k-1$, we have that
			\be\label{label me 9}
			\left\vert\left\vert \nsum n T^{P(n)}f \right\vert\right\vert \ \leq \ \left( \frac{k-1}{\lpf N} \right)^{2^{-(k-1)}}.
			\ee
			Note that the right-hand side of Inequality \eqref{label me 9} tends to 0 as $\lpf N \to \infty$ uniformly in $f$. The statement of the theorem follows.
		\end{proof}
		
		\begin{rem} For the interested reader, Theorem \ref{main thm for cyclic groups} has a generalization for measure-preserving systems. Let $\mathsf{X}_m := (X_m, \calb_m, \mu_{m}, T_{m} ), m \in \N,$ be a sequence of ergodic measure-preserving systems. Suppose that for each polynomial $P \in \Z[n]$, one has
			\be
			\limi m \sup_{A_m,B_m \in \calb_m} \left\vert \limi{N} \nsum n \mu_m(A_m \cap T^{P(n)}_m B_m) - \mu_m(A_m)\mu_m(B_m) \right\vert \ = \ 0.
			\ee
			Then
			\be \limi m \min\{ d \in \N : T_m^d 
			\text{ is not ergodic} \} = \infty.
			\ee
			The proof of Theorem~\ref{main thm for cyclic groups} is effectively a sketch of a proof of this fact; one just needs Herglotz's theorem. The converse also holds.
		\end{rem}
		Extracting a precise quantitative statement from the proof of the theorem, we obtain the following version of Theorem~\ref{main thm for cyclic groups}. Afterwards, we give a slightly weaker but more readable version, which we will then apply.
		\begin{thm} \label{too precise} Let $\ve \in (0,1]$. Let $N > 1$ be an integer and let $A,B$ be subsets of $\Z/N\Z$. Let $P(n)$ be an integer-valued polynomial with degree $d > 1$ and leading coefficient $c$, and let $c'$ be the smallest positive integer so that $c'P$ has integer coefficients. Let $C_P$ be the maximum of $c'$, $d$, and the largest prime dividing $cc'$. If
			\be
			\lpf N \ > \ \max\left\{C_P, (d-1)\mu(A)(1-\mu(A))\left(\ve \mu(A) \sqrt{\mu(B)}\right)^{-2^{d-1}}\right\},
			\ee
			then the number $s$ of pairs $(n,m) \in \Z/N\Z \times \Z/N\Z$ such that $n+P(m) \in A$ and $n \in B$ satisfies $|s-|A||B|| < \ve|A||B|$.
		\end{thm}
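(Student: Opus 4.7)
The plan is to view this as a quantitative refinement of Theorem~\ref{main thm for cyclic groups}: the qualitative convergence in that theorem comes from the chain of bounds in Lemmas~\ref{vdc lemma} and \ref{lpf bound}, and I simply need to apply those bounds to the indicator function $\mathbf{1}_A - \mu(A)$ and carefully track what they give.

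First I would rewrite the count $s$ as an inner product. Since $(T^{P(m)} f)(n) = f(n + P(m))$, we have
\[
\frac{s}{N^2} \;=\; \frac{1}{N}\sum_{m=1}^N \frac{1}{N}\sum_{n=1}^N \mathbf{1}_A(n+P(m))\mathbf{1}_B(n) \;=\; \left\langle \frac{1}{N}\sum_{m=1}^N T^{P(m)} \mathbf{1}_A,\;\mathbf{1}_B \right\rangle,
\]
and so, writing $f := \mathbf{1}_A - \mu(A)$ (which is mean-zero with $\|f\|^2 = \mu(A)(1-\mu(A))$ and $|f|\le 1$), a quick computation gives
\[
\frac{s}{N^2} - \mu(A)\mu(B) \;=\; \left\langle \frac{1}{N}\sum_{m=1}^N T^{P(m)} f,\;\mathbf{1}_B\right\rangle.
\]
Applying Cauchy--Schwarz with $\|\mathbf{1}_B\| = \sqrt{\mu(B)}$ reduces matters to bounding the $L^2$-norm of the polynomial average of $f$.

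Next I would run the argument from the proof of Theorem~\ref{main thm for cyclic groups} on $f$, but keep the explicit factor $\|f\|^2$ (rather than the loose bound $\|f\|^2 \le 1$). The reductions there --- passing to the integer-coefficient case via the permutation $\varphi_{c'}$ and linearizing $P$ via the permutation $\phi_{h_1,\dots,h_{k-1}}$ --- require exactly $\lpf N > C_P$, i.e.\ that $c'$, $k$, and the primes dividing the leading coefficient $c$ all be invertible modulo $N$. Under that assumption, Lemma~\ref{vdc lemma} followed by Lemma~\ref{lpf bound} (with $\int f\, d\mu = 0$) yields
\[
\left\|\frac{1}{N}\sum_{m=1}^N T^{P(m)} f\right\|^{2^{d-1}} \;\le\; \frac{d-1}{\lpf N}\,\|f\|^2 \;=\; \frac{(d-1)\mu(A)(1-\mu(A))}{\lpf N}.
\]

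Finally I would just combine the two estimates: we get
\[
\left|\frac{s}{N^2} - \mu(A)\mu(B)\right| \;\le\; \sqrt{\mu(B)}\left(\frac{(d-1)\mu(A)(1-\mu(A))}{\lpf N}\right)^{2^{-(d-1)}},
\]
and the desired conclusion $|s - |A||B|| < \ve |A||B|$ after multiplying through by $N^2$ is the assertion that the right-hand side is strictly less than $\ve\mu(A)\mu(B)$. Solving this inequality for $\lpf N$ produces exactly the second term in the max on the right-hand side of the hypothesis, so the two constraints on $\lpf N$ (the structural $C_P$ and the quantitative bound) give exactly the stated threshold.

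I do not expect a real obstacle here: all the heavy lifting is already done in Lemmas~\ref{vdc lemma} and \ref{lpf bound} and in the reductions of Theorem~\ref{main thm for cyclic groups}. The only care needed is bookkeeping --- replacing the loose bound $\|f\|^2 \le 1$ with the sharp $\mu(A)(1-\mu(A))$, inserting the $\sqrt{\mu(B)}$ factor from Cauchy--Schwarz, and solving the resulting inequality for $\lpf N$ --- so the mildly subtle point is just making sure the final algebraic rearrangement matches the stated threshold exactly.
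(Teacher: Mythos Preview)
Your proposal is correct and is exactly what the paper intends: the paper does not give a separate proof of this theorem but simply states it as ``extracting a precise quantitative statement from the proof of [Theorem~\ref{main thm for cyclic groups}],'' and your extraction --- rewriting $s/N^2$ as an inner product, splitting off $\mu(A)$ to get a mean-zero $f$, applying Cauchy--Schwarz against $\mathbf{1}_B$, and then feeding $f$ through Lemma~\ref{vdc lemma} and Lemma~\ref{lpf bound} while retaining the sharp $\|f\|^2 = \mu(A)(1-\mu(A))$ from the latter --- recovers the stated threshold on the nose. The identification of $C_P$ with the invertibility requirements for the two permutations $\varphi_{c'}$ and $\phi_{h_1,\dots,h_{d-1}}$ is also right.
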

		\begin{proof}
			We observe that $s = \sum_{n=1}^N |B \cap T^{-P(n)}A|$. Hence
			\[ |s - |A||B|| \ = \ \left| \left( \sum_{n=1}^N |B \cap T^{-P(n)}A|\right) - |A||B| \right|. \]
			We want to derive a bound of the shape $|s - |A||B|| < \ve |A||B|$. Dividing through by $N^2$, this means we instead want to derive a bound of the shape
			\be\label{too precise 1}
			\left| \frac{1}{N} \sum_{n=1}^N \mu(B \cap T^{-P(n)}A) -\mu(A)\mu(B) \right| \ < \ \ve \mu(A)\mu(B).
			\ee
			Let us bound the left-hand side of \eqref{too precise 1} using the argument of Theorem~\ref{main thm for cyclic groups}.
			Rewriting the left-hand side of \eqref{too precise 1} using our inner product and applying Cauchy--Schwarz, we find
			\ba \left| \frac{1}{N} \sum_{n=1}^N \mu(B \cap T^{-P(n)}A) -\mu(A)\mu(B) \right| \ & = \ \left| \lr{\frac{1}{N} \sum_{n=1}^N T^{P(n)}(1_A-\mu(A)),1_B} \right| \\ & \leq \ \norm{\frac{1}{N} \sum_{n=1}^N T^{P(n)}(1_A-\mu(A))} \norm{1_B} \\
			& = \ \norm{\frac{1}{N} \sum_{n=1}^N T^{P(n)}f} \sqrt{\mu(B)}
			\ea
			for a function $f = 1_A - \mu(A)$ with integral zero that moreover satisfies $\norm{f}^2 = \mu(A)(1-\mu(A))$. Arguing as in the proof of Theorem~\ref{main thm for cyclic groups} but applying Lemma~\ref{lpf bound} with more attention to the value of $\norm{f}^2$, we obtain a bound on $\norm{\frac{1}{N} \sum_{n=1}^N T^{P(n)}f}$ which implies that
			\be\label{too precise 2}
			\left| \frac{1}{N} \sum_{n=1}^N \mu(B \cap T^{-P(n)}A) -\mu(A)\mu(B) \right| \ \leq \ \left( \frac{d-1}{\lpf N} \mu(A)(1-\mu(A)) \right)^{2^{-(d-1)}}\sqrt{\mu(B)}.
			\ee
			Thus, the statement we are trying to prove now should follow from requiring the right-hand side of \eqref{too precise 2} to be less than the right-hand side of \eqref{too precise 1}. On rearranging the inequality
			\[ \left( \frac{d-1}{\lpf N} \mu(A)(1-\mu(A)) \right)^{2^{-(d-1)}}\sqrt{\mu(B)} \ < \ \ve \mu(A)\mu(B), \]
			we observe that it is equivalent to the inequality $\lpf N > (d-1)\mu(A)(1-\mu(A))\left(\ve \mu(A) \sqrt{\mu(B)}\right)^{-2^{d-1}}$, which does it. The other requirements on $\lpf N$ arise from the argument in the proof of Theorem~\ref{main thm for cyclic groups}.
		\end{proof}
		
		\begin{thm} \label{quantitative main thm for cyclic groups} Let $\delta \in (0,1]$ and $P (n)$ be an integer-valued polynomial with degree $d > 1$. There exists a constant $C = C(P,\delta)$ such that the following hold.
			\begin{enumerate}
				\item For any integer $N > 1$ with $\lpf N > C$ and any subsets $A$ and $B$ of $\Z/N\Z$ such that $|A||B| \geq \delta N^2$, the set $A+B$ contains an element of the form $P(m)$ for some $m \in \Z/N\Z$.
				\item More precisely, for any $\ve \in (0,1]$, for any integer $N > 1$, for any subsets $A$ and $B$ of $\Z/N\Z$ such that $|A||B| \geq \delta N^2$, if
				\be
				\lpf N \ > \ \frac{C}{\ve^{2^{d-1}}},
				\ee
				then the number $s$ of pairs $(n,m) \in \Z/N\Z \times \Z/N\Z$ such that $n+P(m) \in A$ and $n \in B$ satisfies $|s-|A||B|| < \ve|A||B|$. As a remark, one may take $C = \max\{C_P, \frac{d-1}{4} \cdot \delta^{-2^{d-2}}\}$, where $C_P$ is as in Theorem~\ref{too precise}.
			\end{enumerate}
		\end{thm}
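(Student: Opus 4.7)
The plan is to derive part (2) by recycling the $L^2$ machinery already built up in the proof of Theorem~\ref{main thm for cyclic groups}, and then to obtain part (1) as the case $\ve = 1$ applied with $B$ replaced by $-B$. For part (2), first write the count $s$ as a normalized inner product on $\Z/N\Z$: since the shift $T\colon n\mapsto n+1\bmod N$ satisfies $(T^{P(m)}\mathbf{1}_A)(n)=\mathbf{1}_A(n+P(m))$, one has
\[
\frac{s}{N^2} \;=\; \Big\langle \frac{1}{N}\sum_{m=1}^N T^{P(m)}\mathbf{1}_A,\; \mathbf{1}_B\Big\rangle.
\]
Split $\mathbf{1}_A = \mu(A) + f_0$ with $f_0 := \mathbf{1}_A - \mu(A)$: the constant piece contributes the main term $\mu(A)\mu(B)=|A||B|/N^2$, so the task reduces to controlling the error $\mathcal{E} := \langle \frac{1}{N}\sum_m T^{P(m)}f_0,\, \mathbf{1}_B\rangle$. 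Crucially, $\int f_0\,d\mu = 0$ and $\|f_0\|^2 = \mu(A)(1-\mu(A))$.

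The core estimate comes from reusing the proof of Theorem~\ref{main thm for cyclic groups}. Cauchy--Schwarz gives $|\mathcal{E}|\leq \|\frac{1}{N}\sum_m T^{P(m)} f_0\|\sqrt{\mu(B)}$, and the chain of inequalities in that proof (Lemma~\ref{vdc lemma} applied with $d-1$ differencings, the change of variables linearizing $\Delta_{d-1}(P(n);h_1,\ldots,h_{d-1})$ into $n\prod_i h_i$ --- valid whenever $\lpf N$ exceeds a constant $C_P$ depending on the coefficients of $P$ --- and Lemma~\ref{lpf bound}) yields
\[
\Big\|\frac{1}{N}\sum_{m=1}^N T^{P(m)} f_0\Big\| \;\leq\; \left(\frac{(d-1)\,\mu(A)(1-\mu(A))}{\lpf N}\right)^{\!2^{-(d-1)}},
\]
where the factor $\|f_0\|^2$ is tracked through the last application of Lemma~\ref{lpf bound} rather than bounded by $1$. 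Combining, the target $|s-|A||B|| = N^2|\mathcal{E}| < \ve|A||B|$ reduces to $\lpf N > (d-1)\,\mu(A)(1-\mu(A))\,(\ve\mu(A)\sqrt{\mu(B)})^{-2^{d-1}}$, which is exactly the hypothesis of Theorem~\ref{too precise}.

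Finally, absorb the $\mu(A),\mu(B)$ dependence into a constant $C(P,\delta)$. The assumption $|A||B|\geq\delta N^2$ with $\mu(A),\mu(B)\leq 1$ forces $\mu(A),\mu(B)\in[\delta,1]$ and $\mu(A)\mu(B)\geq\delta$; coupled with $\mu(A)(1-\mu(A))\leq 1/4$, substituting into the threshold above gives a sufficient condition of the clean form $\lpf N > C(P,\delta)/\ve^{2^{d-1}}$, proving part (2). For part (1), apply part (2) with $\ve = 1$ to $A$ and $-B$ (whose cardinalities have the same product): the resulting pair $(n,m)$ with $n\in -B$ and $n+P(m)\in A$ witnesses $P(m)=(n+P(m))+(-n)\in A+B$. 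The only real obstacle is the final bookkeeping --- absorbing $\mu(A)(1-\mu(A))\mu(A)^{-2^{d-1}}\mu(B)^{-2^{d-2}}$ into a clean $\delta$-dependence while preserving the $\ve^{-2^{d-1}}$ factor --- but the substantive work is all in the proof of Theorem~\ref{main thm for cyclic groups}.
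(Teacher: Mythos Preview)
Your proposal is correct and follows exactly the route the paper intends: the paper does not give a standalone proof of Theorem~\ref{quantitative main thm for cyclic groups} but states that it (via Theorem~\ref{too precise}) is obtained by ``extracting a precise quantitative statement from the proof'' of Theorem~\ref{main thm for cyclic groups}, and your write-up carries this out---splitting $\mathbf{1}_A=\mu(A)+f_0$, applying Cauchy--Schwarz against $\mathbf{1}_B$, feeding $f_0$ through Lemmas~\ref{vdc lemma} and~\ref{lpf bound} while tracking $\|f_0\|^2=\mu(A)(1-\mu(A))$, and then absorbing the $\mu(A),\mu(B)$ dependence into $C(P,\delta)$. Your reduction of part~(1) to part~(2) with $\ve=1$ and $B$ replaced by $-B$ is also what the paper indicates.
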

		A corollary is as follows.
		
		\begin{cor} \label{nice cor 2} Fix a real number $\delta \in (0,1]$ and an integer-valued polynomial $Q(n)$ with $\deg(Q) > 1$. For any integer $N > 1$ with $\lpf N$ sufficiently large, for any subsets $A$ and $B$ of $\Z/N\Z$ such that $|A||B| \geq \delta N^2$, one has $\Z/N\Z = A + B + S$, where $S = \{ Q(n) : 1 \leq n \leq N \} \subset \Z/N\Z$.
		\end{cor}
		\begin{proof} For each integer $c$, apply Theorem~\ref{quantitative main thm for cyclic groups} with $P = Q - c$ and $\ve = 1$ to the sets $-A = \{-x \in \Z/N\Z : x \in A\}$ and $B$. The constant $C$ does not depend on $c$, so the result follows.
		\end{proof}
		On the one hand, some cases of Corollary~\ref{nice cor 2} are already known. As a trivial example, if $B = -A$ and $\delta > \frac{1}{4}$, then $A + B$ already covers $\Z/N\Z$ since $(-A+c) \cap A \neq \emptyset$. For another example, fix a prime $N=p$ and polynomials $F_1, F_2, F_3$ with integer coefficients. Since we are concerned with sufficiently large $p$, we may assume that $\deg(F_i) < p$ and $\gcd(\deg(F_i),p) = 1$ for each $i$. Let $A$, $B$, and $S$ respectively be the images modulo $p$ of the polynomials $F_1$, $F_2$, and $F_3$. For a fixed integer $c$, the number of solutions to the equation
		\be \label{congruence eqn 1}
		F_1(x_1)+F_2(x_2)+F_3(x_3) \equiv c \bmod p 
		\ee
		is counted by
		\be
		\psum j \sum_{x_1,x_2,x_3 = 1}^p e^{2\pi i j(F_1(x_1)+F_2(x_2)+F_3(x_3)-c)} \ = \ p^2 + \frac{1}{p} \sum_{j=1}^{p-1} e^{-2\pi i j c} \prod_{i=1}^3 \sum_{x_i=1}^p e^{2\pi i j F_i(x_i)}. 
		\ee
		The classical Weil bound (cf. \cite[Theorem 2E]{schmidt}) asserts that $\left\vert \sum_{x=1}^p e^{2\pi i F(x)/p} \right\vert \leq (\deg(F)-1)\sqrt{p}$ for any $F \in \Z/p\Z[x]$ with $\deg(F) < p$ and $\gcd(\deg(F),p)=1$. Applying it here, we conclude that the number of solutions to \eqref{congruence eqn 1} is at least
		\be
		p^2\left(1-\frac{\prod_{i=1}^3 (\deg{F_i} - 1)}{\sqrt p} \right),
		\ee
		which is positive for sufficiently large $p$, which shows that $A + B + S = \Z/p\Z$, which is the same conclusion we could derive with Corollary~\ref{nice cor 2} using a trivial lower bound on $|A||B|$.
		
		On the other hand, there appear to be some nontrivial consequences of Corollary~\ref{nice cor 2}. When $N$ is composite, it becomes more difficult to use either the Weil bound or Corollary~\ref{nice cor 2} to draw conclusions, but in certain cases it is still reasonable. For example, suppose $F_1(n) = n^{c_1}$, $F_2(n) = n^{c_2}$, and $F_3(n) = n^{c_3}$, where $c_1$, $c_2$, and $c_3$ are integers greater than 1. Still assume $A$, $B$, and $S$ are respectively the images modulo $N$ ($N$ odd) of the polynomials $F_1$, $F_2$, and $F_3$. Recall that, for a positive integer $n$, the Euler totient function $\phi(n)$ gives the number of integers in $\{1,\ldots, n\}$ that are coprime to $n$ and that the little omega function $\omega(n)$ gives the number of distinct prime factors of $n$. By the Chinese remainder theorem, it is easy to see that $|A| = |A \cap \Z/N\Z^\times| + |A \setminus \Z/N\Z^\times| \geq \frac{\phi(N)}{c_1^{\omega(N)}} + |A \setminus \Z/N\Z^\times| \geq \frac{\phi(N)}{c_1^{\omega(N)}}$ and similarly for $|B|$. If we assume $\omega(N) \leq k$, then
		\be
		\frac{\phi(N)}{N} \ \geq \ \prod_{i=1}^{\omega(N)} \left(1-\frac{1}{\lpf{N}}\right) \ \geq \ \left(1-\frac{1}{\lpf{N}}\right)^k,
		\ee
		which tends to 1 as $\lpf N$ tends to infinity. Hence, choosing $Q = F_3$ and $\delta = \frac{9}{10} \cdot (c_1c_2)^{-k}$ and applying Corollary~\ref{nice cor 2}, we conclude that for any integer $N > 1$ such that $\lpf{N}$ is sufficiently large and $\omega(N) \leq k$, we have $\Z/N\Z = A + B + S$. We formalize this result in a slightly more general form as follows.
		
		\begin{cor} \label{nice cor 3} Fix positive integers $k$, $c_1$, $c_2$, all greater than 1, and fix an integer-valued polynomial $Q(n)$ with degree $d > 1$. There exists a constant $C = C(k,c_1,c_2,Q)$ with the following property. For any integer $N > 1$ such that $\omega(N) \leq k$ and $\lpf{N} > C$, we have $\Z/N\Z = \{x^{c_1} + y^{c_2} + Q(z) : x,y,z \in \Z/N\Z\}$.
		\end{cor}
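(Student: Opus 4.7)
The plan is to deduce Corollary~\ref{nice cor 3} directly from Corollary~\ref{nice cor 2} by setting $A := \{x^{c_1} : x \in \Z/N\Z\}$, $B := \{y^{c_2} : y \in \Z/N\Z\}$, and $S := \{Q(z) : z \in \Z/N\Z\}$, so that the claim becomes $\Z/N\Z = A + B + S$. To feed this into Corollary~\ref{nice cor 2}, I need to produce a constant $\delta = \delta(k, c_1, c_2) > 0$ with $|A||B| \geq \delta N^2$ whenever $\omega(N) \leq k$ and $\lpf N$ is sufficiently large; the constant $C = C(k, c_1, c_2, Q)$ is then the maximum of the threshold needed for this density bound and the threshold produced by Corollary~\ref{nice cor 2} applied with the data $(\delta, Q)$.

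The crucial ingredient is the elementary estimate $|A| \geq \phi(N)/c_1^{\omega(N)}$ (and similarly for $B$). First I would demand $C \geq 2$, which forces $N$ odd under the hypothesis $\lpf N > C$. Then the Chinese remainder theorem decomposes $(\Z/N\Z)^\times$ as a product of cyclic groups $(\Z/p_i^{a_i}\Z)^\times$, one for each of the $\omega(N)$ prime-power factors of $N$. On a cyclic group of order $m$, the $c$-th power map has image of size $m/\gcd(c,m) \geq m/c$; multiplying these bounds across the prime-power components and using $\omega(N) \leq k$ yields $|A| \geq \phi(N)/c_1^k$ and $|B| \geq \phi(N)/c_2^k$. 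Combining with $\phi(N)/N \geq (1-1/\lpf N)^{\omega(N)} \geq (1-1/\lpf N)^k$, which tends to $1$ as $\lpf N \to \infty$, I can enforce $\phi(N)/N \geq \sqrt{9/10}$ by taking $\lpf N$ large in terms of $k$, whence $|A||B| \geq \delta N^2$ with $\delta := \tfrac{9}{10}(c_1 c_2)^{-k}$.

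With the density bound in hand, Corollary~\ref{nice cor 2} applied with this $\delta$ and the given $Q$ yields $\Z/N\Z = A + B + S$ for all $N$ with $\lpf N$ exceeding a further threshold depending only on $\delta$ and $Q$, i.e.\ only on $k$, $c_1$, $c_2$, and $Q$. This is the desired conclusion. I do not anticipate any serious obstacle: the argument is a direct repackaging of the elementary cyclic-group counting above so as to satisfy the hypothesis of Corollary~\ref{nice cor 2}, with the hypothesis $\omega(N) \leq k$ serving precisely to make the density bound independent of $N$.
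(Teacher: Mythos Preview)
Your proposal is correct and follows essentially the same approach as the paper: the paper also sets $A$ and $B$ to be the images of $n^{c_1}$ and $n^{c_2}$, uses the Chinese remainder theorem together with the cyclic-group bound to get $|A| \geq \phi(N)/c_1^{\omega(N)}$, combines this with $\phi(N)/N \geq (1 - 1/\lpf N)^k \to 1$, and applies Corollary~\ref{nice cor 2} with $\delta = \tfrac{9}{10}(c_1c_2)^{-k}$. Your explicit justification of the cyclic-group image size and your observation that $C \geq 2$ forces $N$ odd (so that each local unit group is cyclic) are exactly the details the paper leaves implicit.
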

		
		In general, when $n^{c_1}$, $n^{c_2}$, and $Q(n)$ are all different, we are not aware of a proof of Corollary~\ref{nice cor 3} by way of lifting arguments or exponential sum estimates. Using the uniform bound -- see, e.g. \cite{hua1,hua2,hua3} --
		\be \left| \sum_{x=1}^{p^n} e^{2\pi i F(x)/p^n} \right| \leq C' p^{n(1-1/\deg(F))},
		\ee
		where $C'$ is an absolute constant, for integer polynomials $F$ that are nonconstant modulo $p$,
		it would seem to follow only if we allow ourselves more sets than $A$, $B$, and $S$.
		
		In any case, Corollary~\ref{nice cor 2} is not restricted to the case when $A$ and $B$ are images of polynomials, and outside of the case when $N$ is prime and $\delta$ is too large, as described above, it is contributing something new. The following discussion of some hypothetical generalizations of Corollary~\ref{nice cor 3} shows the sharpness of Corollary~\ref{nice cor 3}.
		
		First, for every $\delta \in (0,1]$, is it true that for any integer $N > 1$ such that $\lpf N$ is sufficiently large, if $A$, $B$, and $C$ are subsets of $\Z/N\Z$ such that $|A||B||C| \geq \delta N^3$, then $A+B+C = \Z/N\Z$? The answer is no. As is well known for sumsets of subsets of integers, $A+B+C$ can be small, which happens, for example, when $A=B=C$ and $A$ is an arithmetic progression. This carries over in our situation modulo $N$. Suppose $\delta < 1/1000$, and take $A=B=C$ to be $\{0,1,\ldots, \lceil N/10\rceil \} \subset \Z/N\Z$. Then $|A||B||C| \geq \delta N^3$, but $A+B+C =  \{0,1,\ldots, 3\lceil N/10\rceil \}$, which is certainly not all of $\Z/N\Z$.
		
		There is one more hypothetical strengthening to be considered. For every $\delta \in (0,1]$ and every nonconstant integer-valued polynomial $Q$, is it true that for any integer $N > 1$ such that $\lpf N$ is sufficiently large, if $S$ is the image of $Q$ modulo $N$ and $A$ is a subset of $\Z/N\Z$ such that $|A| \geq \delta N$, then $A+S = \Z/N\Z$? Again the answer is no. Indeed, let $\delta < \tfrac 12$, $Q(n) = n^2$, $N = p^2$ for a prime $p \equiv 3 \bmod 4$, and $A = S = \{n^2 : n \in \Z/N\Z\}$. Suppose $x^2 + y^2 \equiv p \bmod{p^2}$. Reducing modulo $p$, we can observe that $x \not\equiv 0 \bmod p$ and $y \not\equiv 0 \bmod p$ and conclude that $x^2 + y^2 \equiv 0 \bmod p$, which falsely implies that $-1$ is a quadratic residue modulo $p$. Thus $p \not\in A + S$, so $A+S \neq \Z/N\Z$, even though $|A| \geq \delta N$ for sufficiently large $p$.

		For completeness, we mention another corollary of Theorem~\ref{quantitative main thm for cyclic groups}:
		\begin{cor} Fix $k > 1$ an integer. For every sufficiently large $p$, we have $\Z/p\Z = \{ x^k + y^k : x,y \in \Z/p\Z\}$.
		\end{cor}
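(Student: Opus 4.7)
The plan is to apply Theorem~\ref{quantitative main thm for cyclic groups} with $A = B = \{y^k : y \in \Z/p\Z\}$, the set of $k$-th power residues modulo $p$. Since the map $y \mapsto y^k$ on $(\Z/p\Z)^\times$ is $\gcd(k,p-1)$-to-one onto its image, one has $|A| \geq (p-1)/k + 1$, and hence $|A|\,|B| \geq \delta\, p^2$ with $\delta = 1/(k+1)^2$, a density depending only on $k$.

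For each $c \in \Z/p\Z$, I would then invoke part 1 of Theorem~\ref{quantitative main thm for cyclic groups} with the polynomial $P_c(m) = c - m^k$, which has degree $k>1$ and integer coefficients. The constant $C(P_c, \delta)$ depends on $P_c$ only through its degree and leading coefficient, not through its constant term, so a single threshold $C = C(k, \delta)$ works uniformly in $c$. For every prime $p > C$, the theorem produces $x, y, m \in \Z/p\Z$ with $x^k + y^k = c - m^k$, which is the three-term Waring conclusion $c = x^k + y^k + m^k$ (also obtainable from Corollary~\ref{nice cor 2} with $A = B = S = \{y^k\}$).

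The substantive step is to eliminate the third summand. My plan is to reapply part 2 of Theorem~\ref{quantitative main thm for cyclic groups} with $A = \{y^k\}$, $B = \{c - y^k : y \in \Z/p\Z\}$ (both of size $\geq (p-1)/k + 1$, same density $\delta$), and $P(m) = m^k$. Part 2 gives $|s - |A|\,|B|| < \ve\,|A|\,|B|$ for the count $s$ of pairs $(n,m)$ with $n \in B$ and $n + m^k \in A$. The $m = 0$ contribution to $s$ is exactly $|A \cap B|$, which, up to the multiplicity factor $\gcd(k, p-1)^2$, counts the solutions to $c = x^k + y^k$ that we seek. If I can show that the $m \neq 0$ contribution $\sum_{m \neq 0}|(A - m^k) \cap B|$ is strictly less than $s$, then $|A \cap B| > 0$ and the corollary follows.

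I expect the main obstacle to be precisely this last inequality, since Theorem~\ref{quantitative main thm for cyclic groups} gives an averaged bound rather than pointwise control over individual shifts $|(A - m^k) \cap B|$, and a naive $L^2 \to L^\infty$ upgrade loses a factor of $\sqrt{p}$ that proves fatal when $k \geq 3$. A fallback is the classical Weil / Jacobi sum estimate, which yields $\#\{(x,y) : x^k + y^k = c\} = p + O(k^2\sqrt{p})$ uniformly in $c$ and thereby delivers the corollary directly for $p$ sufficiently large in terms of $k$.
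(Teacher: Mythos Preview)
The paper does not supply its own proof of this corollary; it merely records the statement and cites \cite{small3} and \cite[Theorem~4]{bbi}. Your fallback via the Weil (equivalently, Jacobi-sum) estimate $\#\{(x,y):x^k+y^k=c\}=p+O_k(\sqrt{p})$ is correct and is exactly the classical argument behind those references, so that part of your proposal constitutes a complete and standard proof.

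Your diagnosis of why Theorem~\ref{quantitative main thm for cyclic groups} alone does not suffice is also accurate. The first two paragraphs of your proposal rederive the three-summand conclusion $\Z/p\Z=\{x^k+y^k+z^k\}$, which is precisely Corollary~\ref{nice cor 2} with $A=B=\{y^k\}$ and $Q(n)=n^k$; your observation that the constant $C$ is uniform in the constant term $c$ is the same one used in the paper's proof of Corollary~\ref{nice cor 2}. For the two-summand statement, however, one needs for \emph{every} $c$ a pointwise lower bound on $\tfrac{1}{p}\sum_m 1_A(c-m^k)$, whereas Theorem~\ref{main thm for cyclic groups} only controls the $L^2$ norm of $\tfrac{1}{p}\sum_m T^{-m^k}1_A-\mu(A)$; the passage $L^2\to L^\infty$ costs a factor $\sqrt{p}$ that the bound $\big((k-1)/p\big)^{1/2^{k-1}}$ cannot absorb for any $k\ge 2$. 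Consequently the plan in your third paragraph cannot be completed from Theorem~\ref{quantitative main thm for cyclic groups} alone, exactly as you anticipated, and the paper's use of the word ``corollary'' here should be read loosely.
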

		This result is well known; see for example \cite{small3}. It is also essentially a special case of \cite[Theorem 4]{bbi}, which deals with finite fields. As a remark, when the modulus is composite, it is not always possible, as shown above when $N = p^2$ for $p\equiv 3 \bmod 4$, even to represent any element of $\Z/N\Z$ as a sum of two squares. Considerations of this type are connected to Waring's problem modulo $N$. See \cite{small1,small2} for a discussion.
		
		Finally, we prove the following fact claimed in Remark~\ref{canonicality remark}.
		
		\begin{prop}\label{canonicality remark proof}
			Let $P(n)$ be an integer-valued polynomial. There exists a constant $C > 0$ such that for any integer $N > 1$, if $\lpf N > C$, then the set $\{ P(n) : n \in \Z/N\Z\} \subset \Z/N\Z$ is well defined.
		\end{prop}
		\begin{proof}
			Any integer-valued polynomial can be uniquely represented as an integer linear combination of binomial coefficients, which is well known. 
			
			Define $P_0(n) := 1$, $P_1(n) := n$, and for $i \geq 2$ the $i$th binomial coefficient $P_i(n) := \binom{n}{i} = \frac{n(n-1)\cdots (n-i+1)}{i!}$. We claim that $P_i(n)$ is well defined over $\Z/N\Z$ when $\lpf N > i$, that is, $P_i(n+N) \equiv P_i(n) \bmod N$ for any integers $n$ and $N$ such that $\lpf N > i$. The claim is trivial when $i = 0$ or $i=1$, so suppose $i \geq 2$. Observe that
			\[
			P_i(N+n)-P_i(n) \ = \ \frac{1}{i!}\left( \prod_{j=n-i+1}^n (N+j) - \prod_{j=n-i+1}^n j \right) \ = \frac{M}{i!}, 
			\]
			where
			\[ M := N^i + N^{i-1}\left(\sum_{n-i+1 \leq j_1 \leq n} j_1 \right) + N^{i-2}\left( \sum_{\substack{n-i+1 \leq j_1,j_2 \leq n \\ j_1 < j_2}} j_1j_2 \right) + \cdots + N \sum_{\substack{n-i+1 \leq j_1,j_2,j_3,\ldots,j_{i-1} \leq n \\ j_1 < j_2 < \cdots < j_{i-1}}}j_1\cdots j_{i-1}. \]
			Since $P_i(\Z) \subset \Z$, it follows that $i!$ divides $M$. It is also clear that $M/N$ is an integer. Thus, since $\gcd(N,i!) = 1$ by the assumption $\lpf N > i$, it follows by Euclid's lemma that $i!$ divides $M/N$. Hence $P_i(N+n)-P_i(n) = \frac{M}{N\cdot i!} \cdot N$ expresses $P_i(N+n)-P_i(n)$ as an integer multiple of $N$, proving the claim.
			
			Now, since $P(n)$ is an integer linear combination of binomial coefficients, it follows by the claim that $\{P(n) : n \in \Z/N\Z\} \subset \Z/N\Z$ is well defined if $\lpf N$ is sufficiently large.
		\end{proof}

		\section{Discussion of asymptotic total ergodicity phenomena}\label{counterexamples}
		Consider a sequence $\mathsf{X}_m, \ m \in \N$, of rotations on $N_m$ points. If $\limi m \lpf{N_m} = \infty$, then by Theorem~\ref{main thm for cyclic groups} and Cauchy--Schwarz, for any nonconstant integer-valued polynomial $P$, one has
		\be \label{sec 4 eqn 1}
		\max_{A,B \subseteq \Z/N_m\Z} \left| \nmsum n \mu_m(A \cap T_m^{P(n)}B) - \mu_m(A)\mu_m(B) \right| \to 0 \quad \text{ as } m \to \infty.
		\ee
		Thus, when the smallest prime factor of $N_m$ tends to infinity, we have, loosely speaking, that $\mu_m(A \cap T^{P(n)}B)$ is of size $\mu_m(A)\mu_m(B)$ on average, a kind of (averaged) asymptotic independence of subsets of $\Z/N_m\Z$.
		
		Otherwise, we have $\limi m \lpf{N_m} \neq \infty$. Passing, if needed, to a subsequence, we can assume there exists a prime $p$ such that $p$ divides every $N_m$. From here, the possible behaviors of the averages in \eqref{sec 4 eqn 1} are quite varied; $\nmsum n \mu_m(A \cap T_m^{P(n)}B)$ can be far away from the desired value $\mu_m(A)\mu_m(B)$.
		
		As a trivial example, if $P(n) = pn$, $A = \{ n \in \Z/N_m\Z : n \equiv 0 \bmod p\}$, and $B = \{ n \in \Z/N_m\Z : n\equiv 1 \bmod p\}$, then $\mu_m(A \cap T_m^{pn} B) = 0$ for all $n$. If $p \equiv 1 \bmod 4$, then we can exploit algebraic facts to find ``pathological'' behavior when $P(n) = n^2$, as follows in two examples:
		\begin{exa}[Underergodicity for $P(n)=n^2$] Let $p$ be prime with $p \equiv 1 \bmod 4$, and let $N = kp$ for some positive integer $k$. Consider the rotation on $N$ points $\mathsf{X} = (\Z/N\Z, \calp(\Z/N\Z), \mu,T)$. Then there is a set $A \subset \Z/N\Z$ of measure $\mu(A) = 2/p$ such that
			\be\label{UEExample} \frac{1}{N}\sum_{n=1}^{N} \mu (A \cap T^{n^2}A) \ = \ \frac 12 \mu(A)^2.
			\ee
		\end{exa}
		\begin{proof} Since $p \equiv 1 \bmod 4$, we may pick $q_1$ and $q_2$ (nonzero) quadratic nonresidues modulo $p$ such that $q_1 \equiv -q_2 \bmod p$. Let $A = \cup_{c=0}^{k-1} \{cp,cp+q_1\}$ and note $\mu(A) = 2k/kp = 2/p$. By construction $A$ is invariant under $T^{p}$. Moreover, for each $n \in \{1,\ldots,N\}$ with $p \nmid n$, we have $A \cap T^{n^2}A = \varnothing$, since any two elements of $A$ differ by $cp$, $cp + q_1$, or $cp+q_2$ (with $c \in \{0,\ldots,k-1\}$). Thus
			\ba
			\frac{1}{N}\sum_{n=1}^{N} \mu (A \cap T^{n^2}A) \ & = \ \frac{1}{N} \sum_{\substack{n=1 \\ p \mid n}}^{N} \mu(A \cap T^{n^2}A) + \frac{1}{N} \sum_{\substack{n=1 \\ p \nmid n}}^{N} \mu (A \cap T^{n^2}A) \\
			& = \ \frac{k}{kp} \cdot \mu(A) + \frac{kp - k}{kp} \cdot \mu (\varnothing) \\
			& = \ \frac{2}{p^2} \ = \ \frac 12 \mu(A)^2.
			\ea 
		\end{proof}
		
		\begin{exa}[Overergodicity for $P(n)=n^2$] Let $p$ be prime with $p \equiv 1 \bmod 4$, and let $N = kp$ for some positive integer $k$. Consider the rotation on $N$ points $\mathsf{X} = (\Z/N\Z, \calp(\Z/N\Z), \mu,T)$. Then there is a set $A \subset \Z/N\Z$ of measure $\mu(A) = 2/p$ such that
			\be\label{OEExample} \frac{1}{N}\sum_{n=1}^{N} \mu (A \cap T^{n^2}A) \ = \ \frac 32 \mu(A)^2.
			\ee
		\end{exa}
		\begin{proof} Since $p \equiv 1 \bmod 4$, we may pick $q_1$ and $q_2$ nonzero quadratic residues modulo $p$ such that $q_1 \equiv -q_2 \bmod p$. For $i \in \{1,2\}$, choosing $m_i \in \{0, \ldots, p-1\}$ such that $m_i^2 \equiv q_i \bmod p$, observe that
			\be
			S_i \ := \ \{ n \in \Z/N\Z : n^2 \equiv q_i \bmod p\} \ = \ \{cp \pm m_i \in \Z/N\Z : 0 \leq c \leq k-1\},
			\ee
			and that $S_i$ has cardinality $2k$. Let $A = \cup_{c=0}^{k-1} \{cp,cp+q_1\}$ and note $\mu(A) = 2k/kp = 2/p$. As before, any two elements of $A$ are either $cp$, $cp + q_1$, or $cp+q_2$ apart (with $c \in \{0,\ldots,k-1\}$), and by construction $A$ is invariant under $T^{p}$. Moreover, for each $n \in \{1,\ldots,N\}$ with $p \nmid n$, we have three possibilities: First, if $n \in S_1$, then $A\cap T^{n^2}A = \cup_{c=0}^{k-1} \{cp + q_1\}$, and note that this set has $\mu$ measure $1/p$. Second, if $n \in S_2$, then $A\cap T^{n^2}A = \cup_{c=0}^{k-1} \{cp\}$, and again this set has $\mu$ measure $1/p$. Otherwise we have $A\cap T^{n^2}A = \varnothing$. Thus
			\ba
			\frac{1}{N}\sum_{n=1}^{N} \mu (A \cap T^{n^2}A) \ & = \ \frac{1}{N} \sum_{\substack{n=1 \\ p \mid n}}^{N} \mu(A \cap T^{n^2}A) + \frac{1}{N} \sum_{\substack{n=1 \\ n \in S_1}}^{N} \mu (A \cap T^{n^2}A) \\
			& + \ \frac{1}{N} \sum_{\substack{n=1 \\ n \in S_2}}^{N} \mu (A \cap T^{n^2}A) + \frac{1}{N} \sum_{\substack{n=1 \\ n \nmid p \\ n \notin S_1 \cup S_2}}^{N} \mu (A \cap T^{n^2}A) \\
			& = \ \frac{k}{kp} \cdot \mu(A) \ + \ \frac{2k}{kp} \cdot \frac 1p \ + \ \frac{2k}{kp} \cdot \frac 1p \ + \ 0 \\
			& = \ \frac{6}{p^2} \ = \ \frac 32 \mu(A)^2.
			\ea
		\end{proof}
		The previous examples are not the most extreme. In general---that is, without any assumption on the residue class of $p$---we have the following remarks. By Lagrange interpolation, an arbitrary function $\Z/p\Z \to \Z/p\Z$ is a polynomial function with integer coefficients, and moreover for any polynomial $P \in \Z[n]$ we have $P(n+p) \equiv P(n) \bmod p$ for any $n$ and $p$. Thus, since $p$ divides every $N_m$, it is possible to arrange for the following:
		\begin{itemize}
			\item There exist a (nonconstant mod $p$) polynomial $P_1 \in \Z[n]$ and $A_m \subseteq \Z/N_m\Z, \ m \in \N$, with $\mu_m(A_m)$ bounded away from zero such that $\mu_m(A_m \cap T_m^{P_1(n)} A_m) \approx \mu_m(A_m)$ for every $n$.
			\item There exist a (nonconstant mod $p$) polynomial $P_2 \in \Z[n]$ and $A_m \subseteq \Z/N_m\Z, \ m \in \N$, with $\mu_m(A_m)$ bounded away from zero such that $\mu_m(A_m \cap T_m^{P_2(n)} A_m) = 0$ for every $n$.
		\end{itemize}
		We now prove more precise formulations of these two claims.
		
		\begin{prop} Let $p > 3$ be prime, and let $N$ satisfy $\lpf N = p$. Consider the rotation on $N$ points $\mathsf{X} = (\Z/N\Z, \calp(\Z/N\Z), \mu,T)$. Then there exists a polynomial $P(n) \in \Z[n]$ and a set $A \subset \Z/N\Z$ with $\mu(A) = \frac{1}{2}+\frac{1}{2p}$ such that $\mu(A \cap T^{P(n)}A) \ \geq \mu(A) - \frac{1}{p}$ for every $n \in \Z/N\Z$. Moreover, $P(n)$ can be taken to be nonconstant mod $p$. 
		\end{prop}
		\begin{proof}
			Let $A = \{2i + np : 0 \leq (p-1)/2, 0\leq n \leq N/p - 1\}$. Then $\mu(A) = \frac{1}{N} \cdot \frac Np \cdot \frac{p+1}{2} = \frac{1}{2} + \frac{1}{2p}$. By construction, $|A \cap T^2A| = (|A|-1)\cdot \frac{N}{p}$. Hence, by $T^p$-invariance of $A$, we observe that $\mu(A\cap T^{i}A) = \frac{1}{2} - \frac{1}{2p}$ for any $i \equiv 2 \bmod p$. Thus, let $P : \Z/p\Z \to \Z/p\Z$ be some function with image $\{0,2\}$. By Lagrange interpolation, $P$ is a polynomial function with integer coefficients. The result follows since $A$ is $T^p$-invariant and $P(n+p) \equiv P(n) \bmod p$ for each $n \in \{0,\ldots,p-1\}$.
		\end{proof}
		
		\begin{prop} Let $p > 3$ be prime, and let $N$ satisfy $\lpf N = p$. Consider the rotation on $N$ points $\mathsf{X} = (\Z/N\Z, \calp(\Z/N\Z), \mu,T)$. Then there exists a polynomial $P(n) \in \Z[n]$ and a set $A \subset \Z/N\Z$ with $\mu(A) = \frac{1}{2}-\frac{3}{2p}$ such that $\mu(A \cap T^{P(n)}A) \ = \ 0$ for every $n \in \Z/N\Z$. Moreover, $P(n)$ can be taken to be nonconstant mod $p$.
		\end{prop}
		\begin{proof}
			Let $k = \frac{p-5}{2}$, and define $A = \{2i + np : 0 \leq i \leq k, 0 \leq n \leq N/p -1 \} \subset \Z/N\Z$. Clearly $|A| = (k+1)\cdot N/p$, so that $\mu(A) = \frac 12 - \frac{3}{2p}$. By construction, if $x,y \in A$, then $x-y \not\equiv i \bmod p$, where $i \in \{1,3\}$. Thus, let $P : \Z/p\Z \to \Z/p\Z$ be some function with image $\{1,3\}$. By Lagrange interpolation, $P$ is a polynomial function with integer coefficients. The result follows.
		\end{proof}

		Approaching the phenomenon of asymptotic total ergodicity from another angle, one may ask whether there is any integer-valued polynomial $P$ such that \eqref{sec 4 eqn 1} holds for every sequence $(N_m)$ of increasing integers. When $\deg(P) = 1$, it is not hard to answer this question---yes if and only if $P(n) = \pm n + c$ for some integer $c$. When $\deg(P) > 1$, the following proposition gives a negative answer.
		\begin{prop}Let $P(n)$ be an integer-valued polynomial of degree $d > 1$. Then there exists a constant $C = C(P) > 0$ and an increasing sequence of integers $(N_m)$ with the following property: Let $(\Z/N_m\Z,\calp(\Z/N_m), \mu_m,T_m)$ be the rotation on $N_m$ points; then, for every $m$, there exist $A_m,B_m \subseteq \Z/N_m\Z$ such that
			\be
			\left\vert \nmsum n \mu_m(A_m\cap T_m^{-P(n)} B_m) - \mu_m(A_m)\mu_m(B_m) \right\vert \ \geq \ C.
			\ee
		\end{prop}
		\begin{proof} Suppose $P$ has degree $d > 1$, and fix a positive integer $c$ such that the polynomial map $Q(n) := P(cn)$ has integer coefficients. By Dirichlet's theorem on primes in arithmetic progressions, choose a prime $p$ such that $p > c$, $d$ divides $p-1$, and $p$ does not divide the leading coefficient of $Q$. By choice of $p$, $Q$ has degree $d$ when viewed as a map $\Z/p\Z \to \Z/p\Z$ with coefficients reduced modulo $p$. Thus, by \cite{dickson} (see also \cite[Corollary 7.5]{LN}), $Q$ is not a permutation of $\Z/p\Z$. Hence there exists $a \in \Z/p\Z$ such that the preimage $Q^{-1}(\{a\})$ in $\Z/p\Z$ has cardinality $m_a \geq 2$. Choose an increasing sequence of integers $N_m$ such that $\lpf{N_m} = p$. For each $m$, let $A_m := \{n \in \Z/N_m\Z : n \equiv 0 \bmod p \}$ and $B_m := \{ n \in \Z/N_m\Z : n \equiv a \bmod p \}$. Then we have
			\be
			A_m \cap T^{-Q(n)}B_m = \begin{cases} A_m \quad \text{if } Q(n) \equiv a \bmod p, \\ \varnothing \quad \text{otherwise.} \end{cases}
			\ee
			Since $\lpf{N_m} = p > c$, the map $n \mapsto cn$ is a permutation of $\Z/N_m\Z$. Also, $Q(p+n) \equiv Q(n) \bmod p$ for each $n$. It follows that
			\be
			\nmsum n \mu_m(A_m \cap T_m^{-P(n)}B_m) \ = \ \nmsum n \mu_m(A_m\cap T_m^{-Q(n)}B_m) \ = \ \frac{m_a}{p} \cdot \mu_m(A_m) \ \geq \ \frac{2}{p^2},
			\ee
			which concludes the proof of the proposition on taking $C = \frac{1}{p^2}$.
		\end{proof}
		

		In conclusion, we address one more possibility for an asymptotic version of a mixing notion.
		A system $\mathsf{X} = (X,\calb,\mu,T)$ is weakly mixing if $T \times T$ is ergodic. Equivalently, $\mathsf{X}$ is weakly mixing if and only if, for every $A \in \calb$, one has
		\be
		\clim N n \left| \mu(A \cap T^{-n}A) - \mu(A)^2 \right| \ = \ 0.
		\ee
		In view of this characterization, one might make asymptotic the notion of weak mixing thus:
		\begin{definition}
			Let $(N_m)$ be a sequence of positive integers. For each $m \in \N$, let $\mathsf{X}_m = (\Z/N_m\Z, \calp(\Z/N_m\Z), \mu_{m}, T_{m})$ be the rotation on $N_m$ points. We say that the sequence $(\mathsf{X}_m)$ is asymptotically weakly mixing if
			\be
			\limi m \max_{A \subseteq \Z/N_m\Z} \nmsum n \left\vert \mu_m(A \cap T_m^{n}A) - \mu_m(A)^2 \right\vert \ = \ 0.
			\ee
		\end{definition}
		\noindent Could a sequence of rotations on $N_m$ points such that $\lpf{N_m} \to \infty$ actually be asymptotically weakly mixing rather than merely asymptotically totally ergodic? The answer is no, as can be seen by considering the ``interval'' $A = \{0,1,\ldots, \lfloor \frac{N_m}{10} \rfloor\}$.

		\section{Illustration of Property $n^2$-LA}\label{basic cyclic group results}
		In this section, we show via direct calculation and without the help of Theorem~\ref{main thm for cyclic groups} that for any positive integer $k$, the sequence of $k$th powers of primes which are congruent to 3 modulo 4 has Property $n^2$-LA. The situation when $k > 1$ suggests the potential difficulty that is overcome by choosing estimation as the proof strategy of Theorem~\ref{main thm for cyclic groups} rather than direct calculation; this section concludes with some more discussion. We start with the straightforward case $k = 1$.

		\begin{prop}\label{p3mod4} Let $p$ be prime with $p \equiv 3 \bmod 4$. Consider the measure-preserving system $\mathsf{X}_p := (\Z/p\Z,\calp (\Z/p\Z), \mu, T)$, where $\mu$ is normalized counting measure and $T$ the map $n \mapsto n + 1$ modulo $p$. Let $a \in \Z/p\Z$ be nonzero. For all $A \subset \Z/p\Z$, we have
			\be\label{basic cyclic group results eqn 1}
			\psum n \mu(A \cap T^{an^2}A) \ = \ \mu(A)^2.
			\ee
		\end{prop}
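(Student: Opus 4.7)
The key observation is that $p \equiv 3 \bmod 4$ is equivalent to $-1$ being a quadratic nonresidue mod $p$, and this is precisely what forces the required cancellation. I will use a direct combinatorial argument based on the autocorrelation of $\mathbf{1}_A$; a Fourier-theoretic proof via quadratic Gauss sums and the identity $\left(\frac{-1}{p}\right) = -1$ works equally well and perhaps sits more naturally with the Fourier machinery of Section~\ref{advanced cyclic group results}.

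Write $f = \mathbf{1}_A$ and set $C(r) := \sum_{m=0}^{p-1} f(m) f(m-r)$, so that $\mu(A \cap T^{n^2}A) = \tfrac{1}{p} C(n^2)$. Let $R \subset \Z/p\Z$ denote the set of nonzero quadratic residues and $\mathcal{N}$ the nonresidues. As $n$ ranges over $\{1,\ldots,p\}$ the value $n^2 \bmod p$ equals $0$ once (at $n = p$) and hits each element of $R$ exactly twice, hence
\[
\psum n \mu(A \cap T^{n^2}A) \ = \ \frac{1}{p^2}\Bigl( C(0) + 2 \sum_{r \in R} C(r) \Bigr).
\]
Next, three elementary facts about $C$: the symmetry $C(-r) = C(r)$, obtained by substituting $m' = m+r$ in the defining sum; the identity $C(0) = |A|$; and the total-mass formula $\sum_{r=0}^{p-1} C(r) = \bigl(\sum_m f(m)\bigr)^2 = |A|^2$, obtained by swapping the order of summation.

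Finally I bring in the hypothesis: $p \equiv 3 \bmod 4$ forces $-R = \mathcal{N}$, so $\sum_{r \in \mathcal{N}} C(r) = \sum_{r \in R} C(-r) = \sum_{r \in R} C(r)$. Consequently $2 \sum_{r \in R} C(r) = \sum_{r \neq 0} C(r) = |A|^2 - |A|$, and substituting this along with $C(0) = |A|$ into the display above yields exactly $|A|^2/p^2 = \mu(A)^2$. There is no serious obstacle; the entire proof rests on the elementary fact that $-R = \mathcal{N}$ when $p \equiv 3 \bmod 4$, together with the symmetry of the autocorrelation.
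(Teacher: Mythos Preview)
Your proof is correct and follows essentially the same route as the paper's: both decompose the average as the $n^2\equiv 0$ term plus twice the sum over nonzero quadratic residues, then use the symmetry $C(r)=C(-r)$ (equivalently $\mu(A\cap T^iA)=\mu(A\cap T^{p-i}A)$) together with $-R=\mathcal{N}$ for $p\equiv 3\bmod 4$ to convert this into the full average $\tfrac1p\sum_i \mu(A\cap T^iA)=\mu(A)^2$. The only cosmetic difference is that you phrase everything via the autocorrelation $C(r)$ while the paper works directly with $\mu(A\cap T^iA)$.
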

		\begin{proof} First suppose that $a$ is a quadratic residue mod $p$. Then there exists a nonzero $\rho \in \Z/p\Z$ such that $\rho^2 \equiv a \bmod p$. Then, the change of variables $n \mapsto \inv{\rho}n$ permutes $\Z/p\Z$, so it follows that
			\be
			\psum n \mu(A \cap T^{an^2}A) \ = \ \psum n \mu(A \cap T^{n^2}A).
			\ee
			Thus, we may rewrite the left-hand side of \eqref{basic cyclic group results eqn 1} as 
			\be
			\psum n \mu(A \cap T^{n^2}A) \ = \ \sum_{\substack{i = 0 \\ \leg ip \in\{0,1\}}}^{p-1} \frac{C(i,p)}{p} \cdot \mu(A \cap T^i A),
			\ee
			where $\leg \cdot\cdot$ is the Legendre symbol and $C(i,p)$ is the number of solutions to $x^2 = i$ in $\Z/p\Z$. To simplify this, note that $C(0,p) = 1$ and $C(i,p) = 2$ for all $i$ that are nonzero quadratic residues modulo $p$, since $k$ is a solution iff $p-k$ is. Thus we have
			\be 
			\label{eqn2alt} \sum_{\substack{i = 1 \\ \leg ip \in\{0,1\}}}^{p} \frac{C(i,p)}{p} \cdot \mu(A \cap T^i A) \ = \ \frac 1p \cdot \mu(A) + \sum_{\substack{i = 1 \\ \leg ip = 1}}^{p-1} \frac{2}{p}\cdot \mu(A \cap T^i A).
			\ee
			To conclude, we need only two observations. First, note that the number of nonzero quadratic residues modulo $p$ is $(p-1)/2$, so the right sum in \eqref{eqn2alt} has $(p-1)/2$ summands. Second, since $p \equiv 3 \bmod 4$, $i$ is a nonzero quadratic residue if and only if $p-i$ is not. This, combined with the fact that $\mu(A \cap T^iA) = \mu(A \cap T^{p-i}A)$ by shift-invariance, implies
			\be\label{basic cyclic group results eqn 3}
			\sum_{\substack{i = 1 \\ \leg ip = 1}}^{p-1} \frac{2}{p}\cdot \mu(A \cap T^i A) \ = \  \sum_{\substack{i = 1 \\ \leg ip = 1}}^{p-1} \frac{1}{p}\cdot \left( \mu(A \cap T^i A) + \mu(A \cap T^{p-i} A)\right) \ = \ \frac 1p \sum_{\substack{i = 1 \\ \leg ip \in \{ 1, -1\} }}^{p-1} \mu(A \cap T^{i}A),
			\ee
			so that
			\be \psum n \mu(A \cap T^{an^2}A) \ = \ \psum n \mu(A \cap T^n A) \ = \ \mu(A)^2,
			\ee
			where the last equality follows by simple counting or by the mean ergodic theorem.
			
			Now suppose that $a$ is not a quadratic residue mod $p$. Since the product of a (nonzero) quadratic residue and a (nonzero) quadratic nonresidue of $\Z/p\Z$ is a quadratic nonresidue, and for $c \in \Z/p\Z \setminus \{0\}$ the map $n \mapsto cn$ is a permutation of $\Z/p\Z$, it follows that $n \mapsto an^2$ sends $0$ to $0$ and is a two-to-one map from $\Z/p\Z \setminus \{0\}$ onto the set of (nonzero) quadratic nonresidues. Thus, we may rewrite the left-hand side of \eqref{basic cyclic group results eqn 1} as
			\be
			\psum n \mu(A \cap T^{an^2}A) \ = \ \frac 1p \cdot \mu(A) + \sum_{\substack{i = 1 \\ \leg ip = -1}}^{p-1} \frac{2}{p}\cdot \mu(A \cap T^i A).
			\ee
			The argument now proceeds as in \eqref{basic cyclic group results eqn 3}.
		\end{proof}
		
		We conclude with the second proposition of this section, which pushes the proof technique a little further.
		\begin{prop}\label{p3mod4powers}
			Let $p$ be prime with $p \equiv 3 \bmod 4$, and fix an integer $k > 1$. Consider the measure-preserving system $\mathsf{X}_{p^k}:=(\Z/p^k\Z,\calp (\Z/p^k\Z), \mu, T)$, where $\mu$ is normalized counting measure and $T$ the map $n \mapsto n + 1$ modulo $p^k$. For each $m \in \{0, 1, \ldots, k \}$, define the $\sigma$-algebra of $T^{p^m}$-invariant subsets $\calb_{m} := \{ A \subset \Z/p^k\Z : T^{p^m}A = A\}$. For all $A \subset \Z/p^k\Z$,
			\be\label{pkgoal}
			\frac{1}{p^k} \sum_{n=1}^{p^k} \mu(A \cap T^{n^2}A) \ = \ \mu(A)^2 \ + \ \frac{C_k}{p^{k/2}} \mu(A) \ + \ \sum_{m=1}^{k-1} \frac{(-1)^m}{p^{\lceil m/2\rceil}} \ \langle \E ( 1_A \vert \calb_m ),1_A \rangle, 
			\ee
			where $C_k = 1$ if $k$ is even and 0 if $k$ is odd.
		\end{prop}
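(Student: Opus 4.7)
My plan is to mimic the proof of Proposition~\ref{p3mod4}, doing the bookkeeping more carefully to accommodate the full power $p^k$. Define $C(i) := |\{x \in \Z/p^k\Z : x^2 \equiv i \pmod{p^k}\}|$, so that
\[
\frac{1}{p^k}\sum_{n=1}^{p^k}\mu(A\cap T^{n^2}A) \;=\; \sum_{i=0}^{p^k-1}\frac{C(i)}{p^k}\,\mu(A\cap T^i A).
\]
A routine Hensel-lifting argument (which works because $p$ is odd) yields $C(0) = p^{\lfloor k/2\rfloor}$, while for $i\neq 0$ with $v_p(i)=w$: $C(i)=0$ unless $w=2u$ is even and $i/p^{2u}$ is a quadratic residue mod $p$, in which case $C(i)=2p^u$. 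Sorting by the valuation and using $p^{\lfloor k/2\rfloor}/p^k = 1/p^{\lceil k/2\rceil}$ then gives
\[
\frac{1}{p^k}\sum_{n=1}^{p^k}\mu(A\cap T^{n^2}A) \;=\; \frac{\mu(A)}{p^{\lceil k/2\rceil}} + \sum_{u=0}^{\lfloor(k-1)/2\rfloor} \frac{2p^u}{p^k}\sum_{\substack{j\in(\Z/p^{k-2u}\Z)^\times\\ \leg{j}{p}=1}} \mu(A\cap T^{p^{2u}j}A).
\]

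Next, I would apply the $p\equiv 3\pmod 4$ pairing trick from Proposition~\ref{p3mod4} verbatim: since $-1$ is a non-residue mod $p$, the involution $j\mapsto p^{k-2u}-j$ on $(\Z/p^{k-2u}\Z)^\times$ swaps QRs with non-QRs, while shift-invariance preserves $\mu(A\cap T^{p^{2u}j}A)$, so the restricted inner sum equals half the full sum over $(\Z/p^{k-2u}\Z)^\times$. I would then express each such full sum via the key identity
\[
\sum_{i\in p^m\Z/p^k\Z}\mu(A\cap T^i A) \;=\; p^{k-m} M_m, \qquad M_m := \langle \E(1_A\mid\calb_m),1_A\rangle,
\]
proved by plugging $\E(1_A\mid\calb_m)(x) = \tfrac{1}{p^{k-m}}\sum_{j=0}^{p^{k-m}-1} 1_A(x+p^m j)$ into the inner product. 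An inclusion-exclusion using $\{i : v_p(i)=2u\} = p^{2u}\Z/p^k\Z \setminus p^{2u+1}\Z/p^k\Z$ collapses each $u$-term to $M_{2u}/p^u - M_{2u+1}/p^{u+1}$, which has precisely the form $(-1)^m M_m / p^{\lceil m/2\rceil}$ for $m \in \{2u, 2u+1\}$.

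Summing over $u$ thus produces a signed sum $\sum_m (-1)^m M_m / p^{\lceil m/2\rceil}$ indexed by $m$ in $\{0,\dots,k-1\}$ or $\{0,\dots,k\}$ according to the parity of $k$. Peeling off $M_0=\mu(A)^2$ gives the leading term. When $k$ is odd, the sum reaches $m=k$; using $M_k=\mu(A)$ (since $\calb_k$ is the full $\sigma$-algebra), the term $-\mu(A)/p^{(k+1)/2}$ exactly cancels the boundary contribution $\mu(A)/p^{\lceil k/2\rceil}$, yielding $C_k=0$. When $k$ is even, the sum stops at $m=k-1$ and the boundary $\mu(A)/p^{k/2}$ term survives, yielding $C_k=1$. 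I expect the main obstacle to be precisely this parity-sensitive boundary accounting; the Hensel count, the pairing trick, and the subgroup-to-conditional-expectation conversion are all natural extensions of the $k=1$ argument.
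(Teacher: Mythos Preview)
Your proposal is correct and follows essentially the same route as the paper's proof: both compute the multiplicity function $C(i)$ by stratifying according to $p$-adic valuation (the paper does this via explicit base-$p$ expansions and a separate lemma establishing the $2p^u$ count, whereas you invoke Hensel lifting directly), then apply the $p\equiv 3\pmod 4$ pairing trick, convert the resulting subgroup sums to $\langle \E(1_A\mid\calb_m),1_A\rangle$ via the same averaging identity, and finish with the identical parity-dependent boundary cancellation. The packaging differs slightly, but the argument is the same.
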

		\begin{rem} In particular, note that as $p \to \infty$ along primes congruent to 3 mod 4,
			\be
			\max_{A \subset \Z/p^k\Z} \left\vert \frac{1}{p^k} \sum_{n=1}^{p^k} \mu(A \cap T^{n^2}A) - \mu(A)^2 \right\vert \to 0.
			\ee
			This is a special case of Theorem~\ref{main thm for cyclic groups}. 
		\end{rem}
		\begin{proof}[Proof of Proposition~\ref{p3mod4powers}] As before, we first determine the image of $n \mapsto n^2$ modulo $p^k$, first without multiplicity, then with multiplicity. After this, we compute and simplify the relevant average. We represent an element of $\{0,1,\ldots, p^k -1\}$ as $x = a_0 + a_1p + a_2p^2+\cdots+a_{k-1}p^{k-1}$ with all $a_i \in \{ 0,1,\ldots,p-1\}$. Let us partition our domain into cells:
			\begin{align*} D_0 & \ := \ \{ a_0 + a_1p + a_2p^2+\cdots+a_{k-1}p^{k-1} : a_0 \neq 0\}, \\
				D_1 & \ := \ \{ a_1 \neq 0 \text{ and } a_0 = 0\}, \\
				D_2 & \ := \ \{ a_2 \neq 0 \text{ and } a_0 = a_1 = 0 \}, \\
				& \quad \vdots \\
				D_{\lceil k / 2 \rceil - 1} & \ := \ \{ a_{\lceil k/2 \rceil - 1} \neq 0 \text{ and } a_0 = a_1 = \cdots = a_{\lceil k/2 \rceil-2} = 0 \}, \\
				D_{\lceil k / 2 \rceil} & \ := \ \{a_0 = a_1 = \cdots = a_{\lceil k/2 \rceil -1} = 0 \}.
			\end{align*} Let us define some subsets\footnote{We handle the case of $k$ even and $k$ odd simultaneously until the end. Even though $S_{\lceil k/2 \rceil - 1}$ looks strange for $k$ odd, we mean it.} of the range:
			\begin{align*}
				S_0 & \ := \ \{ q + c_1p+c_2p^2 + \cdots + c_{k-1}p^{k-1} : c_i \in \Z/p\Z, \; q \neq 0 \text{ a quadratic residue mod $p$} \}, \\
				S_1 & \ := \ \{qp^2 + c_3p^3+c_4p^4+\cdots+c_{k-1}p^{k-1} : c_i \in \Z/p\Z, \; q \neq 0 \text{ a quadratic residue mod $p$} \}, \\
				S_2 & \ := \ \{qp^4 + c_5p^5+c_6p^6+\cdots+c_{k-1}p^{k-1} : c_i \in \Z/p\Z, \; q \neq 0\text{ a quadratic residue mod $p$} \}, \\
				& \quad \vdots \\
				S_{\lceil k/2 \rceil - 1} & \ := \ \{ qp^{2\lceil k/2 \rceil - 2} + cp^{2\lceil k/2 \rceil - 1} : c \in \Z/p\Z, \; q \neq 0 \text{ a quadratic residue mod $p$} \}, \\
				S_{\lceil k/2 \rceil} & \ := \ \{ 0 \}.
			\end{align*}
			Now, for each $n$, every $x \in D_n$ has $x^2 \in S_n$.
			
			To count the multiplicities, first note that all $p^{k-\lceil k/2 \rceil}$ elements of $D_{\lceil k/2 \rceil}$ square to 0. For the rest, we need a lemma:
			\begin{lem}\label{Dn to Sn lemma} Fix $n \in \{0, 1, \ldots, \lceil k/2 \rceil - 1 \}$. For each element in $S_n$, there exist exactly $2p^n$ elements of $D_n$ which square to it.
			\end{lem}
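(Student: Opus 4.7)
The plan is to translate the counting problem into a square-root problem in the group of units modulo $p^{k-2n}$, and then invoke Hensel's lemma. First I would parametrize the source and target cleanly. Each $x \in D_n$ has $p$-adic valuation exactly $n$ (viewing $x$ as an integer in $\{0,\ldots,p^k-1\}$), so it admits a unique expression $x = p^n u$ with $u$ ranging over $(\Z/p^{k-n}\Z)^\times$; this gives $|D_n| = (p-1)p^{k-n-1}$. Similarly, each $s \in S_n$ admits a unique expression $s = p^{2n} w$ with $w \in (\Z/p^{k-2n}\Z)^\times$ whose reduction modulo $p$ is a nonzero quadratic residue. The hypothesis $n \leq \lceil k/2 \rceil - 1$ ensures $2n < k$, so $k - 2n \geq 1$ and these reductions make sense for both parities of $k$.

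Next, I would translate the equation $x^2 = s$ in $\Z/p^k\Z$. Computing $x^2 = p^{2n} u^2$ (which is well-defined in $\Z/p^k\Z$ from $u \bmod p^{k-n}$, since $(u-u')(u+u')$ is divisible by $p^{k-n}$ whenever $u \equiv u' \pmod{p^{k-n}}$), the equation $p^{2n} u^2 \equiv p^{2n} w \pmod{p^k}$ is equivalent to $u^2 \equiv w \pmod{p^{k-2n}}$. Because $p$ is odd and $w$ reduces modulo $p$ to a quadratic residue, Hensel's lemma (applied to the polynomial $X^2 - w$, whose derivative $2X$ is a unit at any square root mod $p$) yields exactly two solutions $\bar{u} \in (\Z/p^{k-2n}\Z)^\times$ to $\bar{u}^2 \equiv w \pmod{p^{k-2n}}$.

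Finally, I would count lifts. The natural surjection $(\Z/p^{k-n}\Z)^\times \to (\Z/p^{k-2n}\Z)^\times$ has kernel of order $p^n$, so each Hensel solution $\bar{u}$ lifts to exactly $p^n$ elements of $(\Z/p^{k-n}\Z)^\times$. Multiplying, we obtain $2 \cdot p^n$ values of $u$, i.e., $2p^n$ elements $x \in D_n$ with $x^2 = s$, which is the desired conclusion.

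I don't anticipate any serious obstacle here; the argument is essentially bookkeeping around Hensel's lemma, and the two care points are (a) matching the $w$-parametrization of $S_n$ with the coefficient description $s = qp^{2n} + c_{2n+1}p^{2n+1} + \cdots$ given in the text (the constant term of $w$ is $q$, and the higher $p$-adic digits of $w$ are $c_{2n+1},\ldots,c_{k-1}$), and (b) confirming $k - 2n \geq 1$ for every allowed $n$, so the Hensel step is nontrivial. As a sanity check on the counts, $2p^n \cdot |S_n| = 2p^n \cdot \tfrac{p-1}{2} p^{k-1-2n} = (p-1)p^{k-n-1} = |D_n|$, consistent with squaring being a surjection from $D_n$ onto $S_n$ that is uniformly $2p^n$-to-$1$.
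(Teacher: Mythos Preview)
Your proof is correct and takes a genuinely different route from the paper's. The paper argues in two stages: first it exhibits $2p^n$ explicit preimages of $a^2$ by observing that the top $n$ base-$p$ digits of $a$ are irrelevant modulo $p^k$ (accounting for a factor $p^n$) and that $-a$ also squares to $a^2$ (accounting for the factor $2$); second, since this only gives a lower bound, it verifies the cardinality identity $|D_n| = 2p^n |S_n|$ and separately proves surjectivity of squaring from $D_n$ onto $S_n$ by exhibiting an injective restriction using the cyclic structure of $(\Z/p^{k-2n}\Z)^\times$. Your argument instead reduces directly to counting square roots of a unit modulo $p^{k-2n}$ via the factorization $x = p^n u$, invokes Hensel's lemma to get exactly two such roots, and then multiplies by the fiber size $p^n$ of the reduction map $(\Z/p^{k-n}\Z)^\times \to (\Z/p^{k-2n}\Z)^\times$. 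Your approach is cleaner and yields the exact count in one pass, at the cost of citing Hensel; the paper's approach is more self-contained and hands-on but requires the extra surjectivity step to upgrade ``at least'' to ``exactly.''
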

			\begin{proof}\renewcommand{\qedsymbol}{} Consider an element $a = a_np^n + a_{n+1}p^{n+1} + \cdots + a_{k-1}p^{k-1}$ in $D_n$, so $a_n \neq 0$ and all $a_i \in \Z/p\Z$. We make two observations. First, the square of $a$ 
				is
				\be
				a^2 \ \equiv \ \sum_{i=2n}^{k-1}\left( \sum_{j=n}^{i-n} a_ja_{i-j}\right) p^i \quad \bmod p^k.
				\ee
				From this, we observe that the $n$ coefficients $a_{k-n}, \ldots, a_{k-1}$ play no role in determining $a^2$ modulo $p^k$, so at least $p^n$ elements in $D_n$ square to $a^2$.
				
				Second, with $a = a_np^n + a_{n+1}p^{n+1} + \cdots + a_{k-1}p^{k-1}$ as before, consider the element $b = b_np^n + b_{n+1}p^{n+1}+\cdots + b_{k-1}p^{k-1}$ in $D_n$, where we set $b_n := p - a_n$ and $b_i := p - (a_i + 1)$ for all other $i$. We observe that
				\be
				b \ = \ (p-a_n)p^n + \sum_{i=n+1}^{k-1} (p-(a_{i}+1))p^i \ = \ - \left( \sum_{i=n}^{k-1}a_ip^i \right) + p^k \ = \ p^k-a,
				\ee
				so that $b^2 \equiv a^2 \bmod p^k$. Thus our two observations show that at least $2p^n$ elements in $D_n$ square to the element $a^2 \in S_n$.
				
				After computing the cardinalities $|D_n| = p^{k-n}-p^{k-n-1}$ and $|S_n| = \frac{1}{2}(p^{k-2n}-p^{k-2n-1})$, we see that the equation $|D_n| = 2p^n|S_n|$ holds. To finish the lemma, it suffices to show the squaring map is surjective from $D_n$ onto $S_n$, because the existence of some element of $S_n$ with more than $2p^n$ square roots in $D_n$ would contradict this cardinality relation. We will actually show that the squaring map is injective on a subset of $D_n$ with $|S_n|$ elements. Since $(\Z/p^{k-2n}\Z)^\times$ is cyclic, it can be written in terms of a generator $g$ as $\{1, g, g^2, \ldots, g^{p^{k-2n}-p^{k-2n-1}-1}\}$. Let $i, j \in \{0,1,\ldots, |S_n|-1\}$ be distinct. Then the elements $p^ng^i$ and $p^ng^j$, interpreted in $\Z/p^k\Z$, belong to $D_n$. Moreover, their squares $p^{2n}g^{2i}$ and $p^{2n}g^{2j}$ are distinct in $S_n$: Otherwise, we would have $p^{2n}(g^{2i}-g^{2j}) \equiv 0 \bmod p^k$, which holds only if $g^{2i}-g^{2j} \equiv p^{k-2n}a \bmod p^k$ for some $a \in \Z/p^k\Z$. The latter equation implies that $g^{2i}-g^{2j} \equiv 0 \bmod p^{k-2n}$, contradicting that $g^{2i}$ and $g^{2j}$ are different elements in $(\Z/p^{k-2n}\Z)^\times$.
			\end{proof}
			We have determined the images with multiplicity of the squaring map on each part of the domain. Planning to justify afterwards, we compute
			\begin{align}
				\frac{1}{p^k} \sum_{n=1}^{p^k} & \mu(A \cap T^{n^2}A) \ = \ \frac{1}{p^k} \sum_{n=0}^{\lceil k/2 \rceil } \sum_{i \in D_n} \mu(A \cap T^{i^2}A) \\
				& \label{stack 1} = \ \frac{p^{k-\lceil k/2 \rceil}}{p^k}\mu(A) + \frac{1}{p^k} \sum_{n=0}^{\lceil k/2 \rceil - 1} 2p^n \sum_{i \in S_n} \mu(A \cap T^i A)\\
				& \label{stack 2} = \ \frac{1}{p^{\lceil k/2 \rceil}}\mu(A) + \sum_{n=0}^{\lceil k/2 \rceil - 1} \frac{1}{p^{k-n}} \sum_{i \in S_n \cup -S_n} \mu(A \cap T^i A) \\
				& \label{stack 3} = \ \frac{1}{p^{\lceil k/2 \rceil}}\mu(A) + \sum_{n=0}^{\lceil k/2 \rceil - 1} \frac{1}{p^{k-n}} \left( \sum_{j\in \Z/p^{k-2n}\Z} \mu(A \cap T^{jp^{2n}}A) \; - \sum_{j \in \Z/p^{k-2n-1}\Z} \mu(A \cap T^{jp^{2n+1}}A) \right) \\
				& \label{stack 4} = \ \frac{1}{p^{\lceil k/2 \rceil}}\mu(A) + \sum_{n=0}^{\lceil k/2 \rceil - 1} \left( \frac{1}{p^n} \ \langle \E (1_A\vert \calb_{2n}),1_A \rangle - \frac{1}{p^{n+1}} \ \langle \E (1_A\vert \calb_{2n+1}),1_A \rangle \right) \\
				& = \ \frac{1}{p^{\lceil k/2 \rceil}}\mu(A) + \sum_{m=0}^{2\lceil k/2 \rceil - 1} \frac{(-1)^m}{p^{\lceil m/2\rceil}} \ \langle \E (1_A \vert \calb_m ), 1_A\rangle.
			\end{align}
			Equality \eqref{stack 1} holds by Lemma \ref{Dn to Sn lemma}. Equality \eqref{stack 2} holds since $\mu(A \cap T^{i}A) = \mu(A \cap T^{p^k - i}A)$ by shift-invariance and since $q$ is a nonzero quadratic residue mod $p$ iff $p-q$ is not. This idea appeared in the proof of Proposition \ref{p3mod4}. Equality \eqref{stack 3} follows from the observation that
			\ba
			S_n \cup -S_n \ & = \ \{ c_{2n}p^{2n} + c_{2n+1}p^{2n+1} + \cdots + c_{k-1}p^{k-1} : c_i \in \Z/p\Z, c_{2n} \neq 0 \} \\
			&  = \ \{ \text{multiples of $p^{2n}$ that are not multiples of $p^{2n+1}$} \}.
			\ea
			Equality \eqref{stack 4} holds since for every $m \in \{0, 1, \ldots, k\}$, every $x \in \Z/p^k\Z$, and every $A \subset \Z/p^k\Z$, we have
			\be
			\E ( 1_A \vert \calb_m )(x) \ = \ \frac{1}{p^{k-m}} \sum_{j \in \Z/p^{k-m}\Z} 1_A(x-jp^m),
			\ee
			which implies
			\be
			\langle \E (1_A \vert \calb_m), 1_A \rangle \ = \ \frac{1}{p^{k-m}} \sum_{j \in \Z/p^{k-m}\Z} \mu(A \cap T^{jp^m}A).
			\ee
			We consider each parity of $k$. When $k$ is odd, we have $2\lceil k/2\rceil - 1 = k$ and $\calb_k = \calp (Z/p^k\Z)$, so $\langle \E (1_A \vert \calb_k),1_A\rangle = \langle 1_A, 1_A \rangle = \mu(A)$ and the last summand $(m = 2\lceil k/2\rceil - 1)$ becomes
			\be
			\frac{(-1)^k}{p^{\lceil k/2 \rceil}} \langle \E (1_A \vert \calb_k),1_A\rangle \ = \ \frac{-1}{p^{\lceil k/2 \rceil}} \mu(A),
			\ee
			canceling the corresponding term before the sum. When $k$ is even, $2 \lceil k/2 \rceil -1 = k - 1$. In either case, since $\calb_0 = \{ \varnothing, \Z/p^k\Z\}$, we have $\E ( 1_A \vert \calb_0 ) = \mu(A)$, so the first summand $(m=0)$ is $\langle \mu(A), 1_A \rangle = \mu(A)^2$. The formulas follow.
		\end{proof}
		A few comments are in order. First, Propositions \ref{p3mod4} and \ref{p3mod4powers} respectively show that the sequence $( \mathsf{X}_p ), \ p = 3, 7, 11, 19, \ldots$, satisfies Property $an^2$-LA for any nonzero integer $a$ and that, for any integer $k > 1$, the sequences $( \mathsf{X}_{p^k}) \ p = 3, 7, 11, 19, \ldots$, satisfy Property $n^2$-LA. Second, although both formulas \eqref{basic cyclic group results eqn 1} and \eqref{pkgoal} are easy to state, one should not expect such formulas for general polynomials, for distinct sets $A \neq B$, or even for the other residue class of primes $p \equiv 1 \bmod 4$.
		Moreover, on observing that the first formula already shows that the averages of polynomial shifts by $n^2$ equal the optimal value of $\mu(A)^2$ without needing $p$ to grow, one might be tempted to consider a ``stationary'' version of Property $P$-LA, asking for which polynomials $P$ \eqref{basic cyclic group results eqn 1} is true without relying, as in the second proposition, on the asymptotic parameter to smooth it out.
		There may be some life in this question, as \eqref{basic cyclic group results eqn 1} demonstrates that $n^2$, a polynomial that does not merely permute the elements of $\Z/p\Z$, yields the optimal value. A more precise discussion can be found in the appendix.
		
		\section*{Appendix}
		
		In this appendix, we will address the following question:
		
		Fix a prime $p > 2$, and consider the measure-preserving system $\mathsf{X}_p := (\Z/p\Z,\calp(\Z/p\Z),\mu,T)$, where $\mu$ is normalized counting measure and $T$ the map $n \mapsto n+1$ modulo $p$. Which polynomials $q(n) \in \Z[n]$ have the property that for all $A \subset \Z/p\Z$, one has
		\begin{equation}\label{appendix eqn}
			\psum n \mu(A \cap T^{q(n)}A) \ = \ \mu(A)^2?
		\end{equation}
		
		It turns out that the exact condition $q(n)$ must satisfy is that
		\be\label{appendix condition}  \text{For all nonzero } m \in \Z/p\Z, \; |\{ l \in \Z/p\Z : q(l) = \pm m\}| = 2. \ee
		
		Indeed, suppose $q(n)$ satisfies \eqref{appendix condition}. Then $q(n)$ has exactly one zero. Let $S_1 = \{n \in \Z/p\Z : |q^{-1}(\{n\})| = 1\}$, $S_2 = \{n \in \Z/p\Z : |q^{-1}(\{n\})| = 2 \}$, and $-S_2 = \{n : -n \in S_2\}$. For any $n \in S_2$, by $T$-invariance of $\mu$, we have that $2\mu(A \cap T^nA) = \mu(A \cap T^n A) + \mu(T^{-n}A \cap A)$. Thus, since $S_1 \sqcup S_2 \sqcup -S_2 = \Z/p\Z$, we conclude that
		\ba
		\psum n \mu(A \cap T^{q(n)}A) \ & = \ \frac{1}{p} \sum_{n \in S_1} \mu(A \cap T^{n}A) + \frac{2}{p} \sum_{n \in S_2} \mu(A \cap T^n A) \\
		& = \ \frac{1}{p} \sum_{n \in S_1} \mu(A \cap T^{n}A) + \frac{1}{p} \sum_{n \in S_2} \mu(A \cap T^n A) + \frac{1}{p} \sum_{n\in -S_2} \mu(A \cap T^n A) \\
		& = \ \psum n \mu(A \cap T^n A) \\ & = \ \mu(A)^2.
		\ea
		
		Now, suppose that $q(n)$ does not satisfy the condition; namely, there exists nonzero $m \in \Z/p\Z$ such that $i := |\{ l : q(l) = \pm m \}| \neq 2$. For the set $A = \{0,m\}$, one computes easily that
		\[
		\psum n \mu(A \cap T^{q(n)}A) \ = \ \frac{i}{p} \cdot \frac{1}{p} + \frac{|q^{-1}(\{0\})|}{p} \cdot \frac{2}{p} \ = \ \frac{i + 2|q^{-1}(\{0\})|}{p^2}.
		\]
		If $|q^{-1}(\{0\})| \neq 1$, then the set $B = \{0\}$ satisfies
		\[ \psum n \mu(B \cap T^{q(n)} B) \ \neq \ \mu(B)^2.\]
		Indeed, if $|q^{-1}(\{0\})| = 0$, then
		\[ \psum n \mu(B \cap T^{q(n)} B) \ = \ 0 \ < \ \mu(B)^2,\]
		and if $|q^{-1}(\{0\})| > 1$, then
		\[ \psum n \mu(B \cap T^{q(n)} B) \ > \ \mu(B)^2. \]
		Otherwise, we have $|q^{-1}(\{0\})| = 1$, which implies that $i + 2|q^{-1}(\{0\})| \neq 4$ for any $i \neq 2$; hence,
		\[
		\psum n \mu(A \cap T^{q(n)}A) \ \neq \ \mu(A)^2.
		\]
		Indeed, if $i < 2$, then
		\[ \psum n \mu(A \cap T^{q(n)}A) \ < \ \mu(A)^2,\]
		and if $i > 2$, then
		\[ \psum n \mu(A \cap T^{q(n)}A) \ > \ \mu(A)^2.\]
		
		If $p \equiv 3 \bmod 4$, then the map $q(n) = an^2$ satisfies condition \eqref{appendix condition}. This explains Proposition~\ref{p3mod4}.
		

		Along the same lines as the question we have just considered, one may ask the following harder question. Namely, for which polynomials $q(n) \in \Z[n]$ is it true that for any prime $p > 3$, for any set $A$, the equation \eqref{appendix eqn} holds?
		
		A polynomial $q(n) \in \Z[n]$ that is a permutation over $\Z/p\Z$ for every $p$ is certainly an example of the kind of polynomial the question is about. But there are almost no such polynomials. Indeed, suppose $q(n)$ has degree $d > 1$ and leading coefficient $c$. By Dirichlet's theorem on primes in arithmetic progressions, choose a prime $p$ such that $p > |c|$ and $d$ divides $p-1$. Then $q(n)$ has degree $d$ when viewed as a map $\Z/p\Z \to \Z/p\Z$ and hence by \cite[Corollary 7.5]{LN} is not a permutation of $\Z/p\Z$.
		
		Also, such a polynomial $q(n)$ must satisfy the condition~\eqref{appendix condition} for each prime $p$. We do not know whether there is any $q(n)$ which always satisfies that condition and has degree larger than 1. We included the discussion here as an interesting curiosity.
		
		\section*{Acknowledgments} We thank the referee for many useful comments.

	\end{document}